\begin{document}
\parskip=6pt

\theoremstyle{plain}

\newtheorem {thm}{Theorem}[section]
\newtheorem {lem}[thm]{Lemma}
\newtheorem {cor}[thm]{Corollary}
\newtheorem {defn}[thm]{Definition}
\newtheorem {prop}[thm]{Proposition}
\numberwithin{equation}{section}
\def\cal{\mathcal}
\newcommand{\cF}{\cal F}
\newcommand{\cA}{\cal A}
\newcommand{\cC}{\cal C}
\newcommand{\cO}{{\cal O}}
\newcommand{\cE}{{\cal E}}
\newcommand{\cU}{{\cal U}}
\newcommand{\cM}{{\cal M}}
\newcommand{\cD}{{\cal D}}
\newcommand{\bC}{\mathbb C}
\newcommand{\bP}{\mathbb P}
\newcommand{\bN}{\mathbb N}
\newcommand{\bA}{\mathbb A}
\newcommand{\bR}{\mathbb R}
\newcommand{\var}{\varepsilon}
\newcommand{\End}{\text{End }}
\newcommand{\loc}{\text{loc}}
\newcommand{\dist}{\text{dist}}
\newcommand{\rk}{\roman{rk }}
\renewcommand\qed{ }
\begin{titlepage}
\title{\bf Modules of square integrable holomorphic germs\thanks{Research partially supported by NSF grant DMS-1162070\newline
2010 Mathematics subject classification 14F18, 32C35, 32L20, 32U05}}
\author{L\'aszl\'o Lempert\\ Department of  Mathematics\\
Purdue University\\West Lafayette, IN
47907-2067, USA}
\thispagestyle{empty}
\end{titlepage}
\date{}
\maketitle
\abstract
This paper was inspired by Guan and Zhou's recent proof of the so-called strong openness conjecture for plurisubharmonic functions.
We give a proof shorter than theirs and extend the result to possibly singular hermitian metrics on vector bundles.
\endabstract

\section{Introduction}

Consider an open set $U\subset\bC^m$ and a point $x\in U$.
Given a holomorphic function $f$ on some neighborhood $V$ of $x$, we will denote by $\mathbf{f}_x$ its germ at $x$.
A measurable function $u\colon U\to [-\infty,\infty]$ determines an ideal $I(u)=I(u,x)$ in the ring $\cO_x=\cO_{(\bC^m,x)}$ of holomorphic germs at $x$
$$
I(u)=\{ \mathbf{f}_x\colon f\in \cO(V),\ \int_V |f|^2 e^{-u} < \infty,\ V\subset U\text{ open}, x\in V\}.
$$
The integral is with respect to Lebesgue measure in $\bC^m$.
Clearly, if $v\leq u+O(1)$ at $x$, then $I(u)\supset I(v)$.
A conjecture, going back to Demailly and Koll\'ar [DK,D2] had that if $u$ is plurisubharmonic, then $I(u)=I(\eta u)$ with some $\eta\in
(1,\infty)$.
After partial results by Favre-Jonsson and Berndtsson, Guan and Zhou recently posted a proof of the conjecture, see [B3,FJ,GZ2-4].
A related posting is [Hi].

Our main purpose with this paper is to produce a proof, as we hope more transparent than Guan-Zhou's, by modifying their approach some, while
keeping all their essential ideas.
We will also discuss generalizations.
The most immediate generalization replaces multiples of $u$ by a sequence of plurisubharmonic functions:

\begin{thm} If $u_1\leq u_2\leq \ldots$ are plurisubharmonic functions on $U$ and $u=\lim_j u_j$ is locally bounded above on $U$, then
$I(u)=I(u_j)$ for some $j$.
\end{thm}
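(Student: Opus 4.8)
The plan is to separate the easy part — packaging with the Noetherian property of $\cO_x$, and (optionally) Krull's intersection theorem — from the one substantial ingredient, the Ohsawa--Takegoshi $L^2$-extension theorem, which enters exactly as in Guan--Zhou.

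\emph{Preliminary reductions.} An increasing limit of plurisubharmonic functions that stays locally bounded above is plurisubharmonic, so $u$ is; shrinking $U$ to a small ball $B$ about $x$ we may take $u_j\le u\le 0$ there. Then $e^{-u_j}\ge e^{-u}$, hence $I(u_1)\subseteq I(u_2)\subseteq\cdots\subseteq I(u)$, a chain in the Noetherian local ring $\cO_x$, so it stabilizes: $\bigcup_j I(u_j)=I(u_{j_0})=:J$. The theorem is therefore equivalent to $I(u)\subseteq J$. I would further reduce this, via Krull's intersection theorem for the finitely generated $\cO_x$-module $\cO_x/J$ (which gives $\bigcap_k(J+\mathfrak m^k)=J$, with $\mathfrak m=\mathfrak m_x$), to the approximation statement
\[
(\star)\qquad\text{for every }\mathbf f_x\in I(u)\text{ and every }k\in\bN,\ \ \mathbf f_x\equiv\mathbf g_x\pmod{\mathfrak m^k}\ \text{for some }\mathbf g_x\in J .
\]
In fact the analytic argument below should produce $\mathbf g_x=\mathbf f_x$, i.e.\ $\mathbf f_x\in I(u_j)$ outright, but it is convenient to have the weaker target available.

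\emph{The analytic step.} Fix $\mathbf f_x\in I(u)$, so $\int_B|f|^2e^{-u}<\infty$. Following Guan--Zhou, I would truncate: for $t>0$ put $u^{(t)}=\max(u,-t)$, a bounded plurisubharmonic function interpolating between $u$ and the constant $-t$, and solve the extension/correction problem for $f$ attached to the sublevel region $\{u<-t\}\cap B$ (equivalently, to the weight $u^{(t)}$) by the Ohsawa--Takegoshi theorem. This yields a holomorphic germ $\mathbf f^{(t)}_x$ agreeing with $\mathbf f_x$ to higher and higher $\mathfrak m$-adic order as $t\to\infty$, together with an $L^2$-bound $\int_{B'}|f^{(t)}|^2e^{-u_{j(t)}}\le C(t)\int_B|f|^2e^{-u}$ for a suitable index $j(t)$; the crux is that, with the extension estimate set up against the right weight, $C(t)$ stays bounded and $j(t)$ stays in a fixed range as $t\to\infty$. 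Passing to the limit then gives, for each $k$, a germ $\mathbf g_x\in J$ with $\mathbf f_x-\mathbf g_x\in\mathfrak m^k$, which is $(\star)$. An alternative organization inducts on $\dim$, restricting to generic affine hyperplanes (pushed toward $x$) and lifting back by Ohsawa--Takegoshi, with the one-variable case handled by the convergence $\nu(u_j,x)\downarrow\nu(u,x)$ of Lelong numbers along increasing sequences (convexity and monotonicity of circular averages); but the sublevel-set version avoids the subtleties of restriction.

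\emph{Where the difficulty lies.} Everything up to the extension theorem is formal bookkeeping. The real work is the quantitative control of the Ohsawa--Takegoshi estimate as the parameter $t$ (or the family of auxiliary hyperplanes) degenerates: one needs the constant $C(t)$ not to grow, and the index $j(t)$ to remain in a fixed range, fast enough that the approximation in $(\star)$ attains every order $k$. This is precisely the point at which Guan--Zhou were forced to invoke the \emph{sharp} constant in $L^2$-extension; presenting this estimate more economically — or arranging the argument so the optimal constant is not needed — is what a shorter proof must supply.
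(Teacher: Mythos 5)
Your proposal has a genuine gap, and the difficulty is not where you locate it.

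Your preferred plan --- truncating to $u^{(t)}=\max(u,-t)$ and ``solving the extension/correction problem for $f$ attached to the sublevel region $\{u<-t\}$'' --- does not get off the ground as written. Ohsawa--Takegoshi is an extension theorem from a complex \emph{submanifold}; a sublevel set $\{u<-t\}$ is an open set, and there is no extension problem to pose. If instead one tries to cut $f$ off near $\{u<-t\}$ and solve $\overline\partial$ with a curvature estimate adapted to some weight, one has to explain why the solution lands in $L^2(e^{-u_j})$ for a \emph{fixed} finite $j$; the increasing sequence $u_j$ only controls curvature in the limit, and nothing in the truncation forces the index $j(t)$ to stabilize. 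Your assertion that ``$C(t)$ stays bounded and $j(t)$ stays in a fixed range'' is precisely the theorem one is trying to prove, not an input. Also, the Krull reduction to $(\star)$ buys you nothing: your own analytic step is supposed to produce $\mathbf g_x=\mathbf f_x$ exactly, and the approximation modulo $\mathfrak m^k$ is no easier to obtain.

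What is actually missing are the two ingredients that carry the paper's proof. First, the Lejeune--Jalabert--Teissier theorem on integral closure: if $\mathbf f_x\notin J=\bigcup_j I(u_j)$, then since $J$ is integrally closed there is a germ of an analytic curve $\alpha\colon(\Delta,0)\to(U,x)$ with $\mathbf f_x\circ\alpha\notin\cO_{(\bC,0)}\,J\circ\alpha$. This is the algebraic backbone; Noetherianity alone (which you do invoke) gives only that $J$ is finitely generated, not the curve. Second, the elementary one-variable estimate (Proposition 2.2 of the paper): if $G=o(F)$ at $0$ and $G$ agrees with $F$ at all $k$-th roots of a point $t$, then $\sup_\Delta|G|\gtrsim|t|^{-k}$. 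The argument then runs: restrict $f$ to a parallel family of hyperplanes $P_\sigma$ transverse to $\alpha$, extend $f|_{V\cap P_\sigma}$ to $g\in\cO(V)$ by Ohsawa--Takegoshi with a bound $\int_V|g|^2e^{-u_j}\le C_3^2\,\|f|V\cap P_\sigma\|^2$, pull back to $G=g\circ\alpha$, and apply the roots-of-unity estimate to force $\|f|V\cap P_\sigma\|\gtrsim|\sigma|^{-1}$; Fubini (divergence of $\int|\sigma|^{-2}$) then contradicts $\mathbf f_x\in I(u)$. The induction on dimension you allude to in your ``alternative organization'' is indeed the right skeleton, but it runs down to $m=0$, not $m=1$, and does not pass through Lelong numbers --- in one variable the Lelong number $\nu(u,x)$ does not determine $I(u)$ (compare $u=2\log|z|$ and $u=2\log|z|+2\log(-\log|z|)$), so the convergence $\nu(u_j,x)\downarrow\nu(u,x)$ cannot by itself close the base case.

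Finally, the sharp Ohsawa--Takegoshi constant is \emph{not} needed. All the argument requires is a constant $C_3$ uniform in $\sigma$ (for the parallel family of hyperplanes $P_\sigma$ inside a fixed bounded pseudoconvex $V$) and in $j$, and that is automatic from the standard extension theorem; the sharp constant enters the effectiveness results of Guan--Zhou, not the strong openness theorem itself.
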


Since in the original conjecture one can always assume that $u$ is bounded above, and then in fact that $u\leq 0$, the conjecture indeed follows from
Theorem 1.1 if one puts $u_j=(1+1/j)u$.---That the ideas of Guan and Zhou also give a proof of Theorem 1.1 occurred to me while reading [GZ2].
Apparently at one point Guan and Zhou also noticed this, because after I communicated to them the generalization, Zhou sent me a preprint
containing essentially the same result, dated earlier than my email to them, in fact even earlier than the submission date of [GZ2].
Subsequently this generalization was also mentioned in [GZ3]. A variant, in dimension 2, occurs already in [FJ, Proposition 2.6].

A further generalization involves, instead of ideals, modules of square integrable vector valued holomorphic germs.
The natural setting here is germs of holomorphic sections of a holomorphic Hilbert bundle $E\to U$, and the role of the weight $e^{-u}$ is
played by a possibly singular hermitian metric $h$ on $E$.
The precise meaning of this and related notions will
be explained in Section 4.
For the time being, let $h\colon E\to [0,\infty]$ be any Borel measurable function on $E$.
Indicating the space of holomorphic sections of a vector bundle by $\Gamma$, we are led to consider the sets 
\begin{equation}
E(h,x)=\{\mathbf{f}_x\colon f\in\Gamma (V,E),\int_V h(f) <\infty,\ V\subset U\text{ open}, x\in V\},
\end{equation}
where $\mathbf{f}_x$ again stands for germ at $x$.
This $E(h,x)$ is an $\cO_x$--module for example if $\sqrt{h}$ is subadditive on the fibers of $E$ and homogeneous in the sense that $\sqrt{h(\lambda e)}=|\lambda| \sqrt{h(e})$ for
$\lambda\in\bC$ and $e\in E$.
Assuming $h$ has this property, as we let $x$ vary, the modules $E(h,x)$ form the stalks of a sheaf of modules denoted $\cE(h)$, a subsheaf of
the sheaf of holomorphic sections of $E$.

\begin{thm}Suppose $E\to U$ is a holomorphic Hilbert bundle and $h_1\geq h_2\geq\ldots$ are hermitian metrics on $E$ whose Nakano curvatures 
dominate $0$. Suppose that $h=\lim_j h_j$ is bounded below by a continuous hermitian metric.
If $\textrm{rk } E<\infty$, or at least $\bigcup_j \cE(h_j)$ is locally finitely generated, then
$\bigcup_j\cE (h_j)=\cE (h)$.
\end{thm}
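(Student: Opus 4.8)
\emph{Proof sketch.}
The inclusion $\bigcup_j\cE(h_j)\subseteq\cE(h)$ is immediate: since the $h_j$ decrease, $h_j\ge h$, so $\int_V h(f)\le\int_V h_j(f)$ and $E(h_j,x)\subseteq E(h,x)$ for every $x\in U$. The content is the reverse inclusion; being local, it suffices to argue near a fixed $x\in U$.

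The first move is to reduce to a single metric. When $\textrm{rk } E<\infty$ the sheaves $\cE(h_j)$ are coherent subsheaves of $\cO(E)$ --- I plan to establish the necessary Nadel-type coherence statement for singular metrics of Nakano curvature dominating $0$ in Section 4 --- so the increasing chain $\cE(h_1)\subseteq\cE(h_2)\subseteq\cdots$ is locally stationary by the Noetherian property of coherent sheaves, whence $\bigcup_j\cE(h_j)$ is locally finitely generated, which is the hypothesis of the remaining case. Assuming $\bigcup_j\cE(h_j)$ is generated near $x$ by finitely many sections, each of their germs at $x$ lies in some $E(h_j,x)$, and the largest of these finitely many indices is a single $j_0$ with $\bigl(\bigcup_j\cE(h_j)\bigr)_x=E(h_{j_0},x)$. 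It therefore suffices to prove: if $f\in\Gamma(V,E)$ with $\int_V h(f)<\infty$, then $\int_W h_{j_0}(f)<\infty$ on some smaller neighborhood $W\ni x$.

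For this I plan to follow Guan and Zhou's \emph{ODE method}. As they do, I would adjoin an extra variable $w$ ranging over a disc $\Delta$, work on $W\times\Delta$ so that $\{w=0\}\cong W$ is a reduced hypersurface, and pick a negative plurisubharmonic $\psi$ on $W\times\Delta$ whose sublevel sets $\Omega_t=\{\psi<-t\}$, $t\ge0$, shrink toward $(x,0)$ while $\Omega_t\cap\{w=0\}$ is a shrinking family of neighborhoods of $x$. Let $G(t)$ be the infimum of $\int_{\Omega_t}h_{j_0}(\tilde f)$ over holomorphic sections $\tilde f$ of $E$ (pulled back to $W\times\Delta$) on $\Omega_t$ with $\tilde f|_{w=0}=f$. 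Using $\tilde f=f$ constant in $w$, the hypothesis $\int_V h(f)<\infty$ together with $h_j\downarrow h$ gives $G(t)<\infty$ for $t$ large, while a holomorphic sub-mean-value estimate in $w$ bounds $\int_{W'}h_{j_0}(f)$ by a multiple of $G(0)$ on a smaller $W'$. So it is enough to propagate finiteness of $G$ from large $t$ down to $t=0$, which I expect to do through a differential inequality for $G$ --- concavity of $G$ after a suitable change of variable --- obtained by extending a near-minimizer on $\Omega_{t_2}$ to $\Omega_{t_1}$ with $L^2$ norm controlled in terms of $t_2-t_1$.

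That extension step is the main obstacle, and it is where the Nakano hypothesis enters. It calls for an Ohsawa--Takegoshi/H\"ormander-type $L^2$ extension theorem for $E$-valued $(m,0)$-forms --- equivalently, holomorphic sections of $E$ over domains in $\bC^m$ --- with the possibly singular metric $h_{j_0}$, the nonnegativity of whose Nakano curvature is precisely what renders the curvature term of the Bochner--Kodaira--Nakano identity nonnegative and so permits solving the relevant $\bar\partial$-equation with estimate. Establishing this for a holomorphic Hilbert bundle, possibly of infinite rank, carrying a singular hermitian metric, and with a constant sharp enough to run the ODE, is the heart of the matter; it will also require approximating $h_{j_0}$ and the $h_j$ from below by smooth Nakano-semipositive metrics and verifying that $\mathbf{f}_x\in E(h,x)$ makes the trace $f|_{w=0}$ of finite weighted norm so the extension can be initiated. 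Lastly, $h$ being bounded below by a continuous metric is what keeps $\cE(h)$ an honest subsheaf of $\cO(E)$, justifies the germwise bookkeeping above, and keeps the limiting weight in the ODE nondegenerate.
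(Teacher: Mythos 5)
Your plan takes a genuinely different route from the paper, so let me first compare and then flag a gap.

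\textbf{Comparison.} The paper proves Theorem~1.2 by induction on the dimension $m$, with the inductive step resting on Lemma~7.1, a hyperplane--slicing characterization of the stalk $M_x=\bigcup_j E(h_j,x)$: $\mathbf{f}_x\in M_x$ if and only if $\liminf\mathrm{dist}(x,P)\,\|f|_{V\cap P}\|=0$ as parallel hyperplanes $P$ approach $x$. The ``only if'' direction is Fubini; the ``if'' direction is the heart, and uses integral closure together with the curve-selection theorem of Lejeune--Jalabert and Teissier to produce a test curve $\alpha$, the roots-of-unity amplification of Proposition~2.2, and the Hilbert-bundle Ohsawa--Takegoshi theorem applied on hyperplane slices. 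Your sketch instead reduces, via local finite generation, to a single index $j_0$ and then proposes to run the Berndtsson/Guan--Zhou ``ODE method'' with an auxiliary disc variable and a concavity inequality for an extension infimum $G(t)$. The two plans share the single heaviest piece of machinery, the Section~6 Hilbert-bundle Ohsawa--Takegoshi theorem, but differ entirely in the remaining architecture: induction/curve-selection/roots-of-unity versus a differential inequality. Note also that the paper explicitly declines to prove that $\cE(h)$ is coherent in finite rank, whereas you propose to prove a Nadel-type coherence statement; this is extra work the paper avoids.

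\textbf{The gap.} The step ``the hypothesis $\int_V h(f)<\infty$ together with $h_j\downarrow h$ gives $G(t)<\infty$ for $t$ large'' does not go through, and I don't see how to repair it as set up. You define $G(t)$ against the fixed larger metric $h_{j_0}$, but $h_{j_0}\ge h$ gives $\int h_{j_0}(\tilde f)\ge\int h(\tilde f)$, which is the wrong direction; there is a priori no $t$ at which $\int_{\Omega_t}h_{j_0}(f)<\infty$. In fact, if $G(t)<\infty$ held for even a single $t$ you would already essentially be done (extract a near-minimizer, restrict to $\{w=0\}$ by a sub-mean-value inequality), so you cannot assume it as a starting point: getting $G(t)<\infty$ for some $t$ \emph{is} the conclusion. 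In the Berndtsson and Guan--Zhou arguments the ODE quantity is formed with the \emph{limit} weight ($e^{-u}$ or $h$), and the effectiveness for a larger weight of the very particular form $e^{-cu}$ with $c>1$ is extracted from the asymptotics of that function of $t$; it is not clear such an argument can be made to produce a uniform $j_0$ for an arbitrary decreasing sequence of Nakano-nonnegative metrics, which is precisely the added generality of Theorem~1.2. The paper's slicing-and-induction strategy sidesteps this by never attempting to pin down a single $j_0$ directly; the index $j$ produced by the inductive hypothesis is allowed to vary with the hyperplane slice, and finite generation is invoked only to pass from the union $M_x$ to a single module at the end, not to set up an effective estimate.
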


In Section 7 we will see that such a result fails if we simply drop the assumption of finite generation.
Yet it seems to be an interesting problem to find weaker conditions on $h_j$ to guarantee the conclusion of the theorem.
Certain direct images of positively curved line bundles provide examples of Hilbert bundles and hermitian metrics as in Theorem 1.2, see
\cite{B2}, and while in these direct images $\cE(h_j)$ and $\bigcup_j\cE(h_j)$ are locally infinitely generated, the strong openness theorem of
Guan--Zhou, or more generally Theorem 1.1 above, does provide a connection between $\bigcup_j\cE(h_j)$ and $\cE(h)$.

When $E$ is of finite rank, $\cE(h)$ is automatically locally finitely generated (= coherent, in this case), although we will not write out a proof.
So the conclusion of
Theorem 1.2 can be stated as $E(h,x)=E(h_j,x)$ for some $j$, in parallel with Theorem 1.1.
Thus Theorem 1.1 is a special case of Theorem 1.2, and it would suffice to prove the latter.
However, we will start by writing out the proof of the special case.
The proof of Theorem 1.2 will follow the same line, but it will be burdened by auxiliary material that is not as readily available for vector
bundles as for line bundles and that we will have to develop.

I am grateful to Bo Berndtsson for helpful discussions concerning \cite{GZ2} and \cite{C}, and to  Henri Skoda and Bernard Teissier  for
bibliographical information.

\section{The proof of Theorem 1.1}
In the setting of Theorem 1.1 we put $J=\bigcup_j I(u_j)$.
Suppose $P\subset\bC^m$ is a complex (affine) hyperplane and $W\subset P$ is relatively open.
For a measurable $g\colon W\to\bC$ define $||g||\in [0,\infty]$ by
$$
\|g\|^2=\inf_j\int_W |g|^2 e^{-u_j},
$$
the integral with respect to $2m-2$ dimensional Lebesgue measure.
By the dominated convergence theorem
\begin{equation}
\|g\|^2=\begin{cases}
\infty\qquad\text{ or}\\
\lim_j \int_W |g|^2 e^{-u_j}=\int_W |g|^2 e^{-u}.\end{cases}
\end{equation}
We denote by dist$(x,P)$ the Euclidean distance between $x$ and $P$, and write $P\|P_0$ to indicate that hyperplanes $P,P_0$ are parallel.
The crux of the matter is the following characterization of $J$ when $m\geq 1$:

\begin{lem}
Let $f\in\cO(U)$.
Its germ $\mathbf{f}_x$ is in $J$ if and only if for any sufficiently small neighborhood $V\subset U$ of $x$ and any hyperplane $P_0\subset\bC^m$
\begin{equation}
\liminf {\rm{dist}} (x,P) \|f|V\cap P\|=0,\quad\text{as } P\|P_0\text{ and }{\rm{dist}} (x,P)\to 0.
\end{equation}
\end{lem}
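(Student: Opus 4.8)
The plan is to establish the two implications separately. The forward one, $\mathbf{f}_x\in J\Rightarrow(2.2)$, is a short Fubini argument; the converse carries the weight of the lemma and is where the analytic input — the Ohsawa--Takegoshi extension theorem, in the form exploited by Guan and Zhou — comes in. I would prove the converse by induction on $m$.

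\emph{Forward implication.} Suppose $\mathbf{f}_x\in I(u_j)$, so $\int_{V_0}|f|^2e^{-u_j}<\infty$ for some neighborhood $V_0$ of $x$; I claim (2.2) holds for every $V\subset V_0$ and every $P_0$. Choose linear coordinates with $x=0$ in which the hyperplanes parallel to $P_0$ are the slices $P_t=\{z_m=t\}$, so $\mathrm{dist}(x,P_t)=|t|$, and put $\phi(t)=\int_{V\cap P_t}|f|^2e^{-u_j}\in[0,\infty]$. By Fubini, $\int_{\bC}\phi\,dV_2=\int_V|f|^2e^{-u_j}\le\int_{V_0}|f|^2e^{-u_j}<\infty$. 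Since $\int_{|t|<\delta}|t|^{-2}\,dV_2=\infty$ for every $\delta>0$, there is no $c>0$ for which $|t|^2\phi(t)\ge c$ can hold throughout a punctured neighborhood of $0$; hence $\liminf_{t\to0}|t|^2\phi(t)=0$. As $\|f|V\cap P_t\|^2=\inf_k\int_{V\cap P_t}|f|^2e^{-u_k}\le\phi(t)$, this gives $\liminf_{t\to0}|t|\,\|f|V\cap P_t\|=0$, which is (2.2).

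\emph{Converse.} Assume (2.2) for all small $V$ and all $P_0$; we want $\mathbf{f}_x\in I(u_j)$ for some $j$. The base case $m=1$ is handled separately: there the slices are points, $\|f|\{p\}\|^2=|f(p)|^2e^{-u(p)}$, $J$ is an ideal of the discrete valuation ring $\cO_x$ and so equals $(z-x)^k\cO_x$ for some $k\ge0$, and since $|f(p)|\asymp|p-x|^{\mathrm{ord}_xf}$, condition (2.2) says exactly $\liminf_{p\to x}|p-x|^{2\,\mathrm{ord}_xf+2}e^{-u(p)}=0$; the heart of the base case is then the one-variable assertion that $\liminf_{p\to x}|p-x|^{2\ell+2}e^{-u(p)}=0$ holds precisely when $(z-x)^\ell$ is square integrable against some $e^{-u_j}$ near $x$, which pins down $k$. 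For $m\ge2$ I would fix coordinates, use (2.2) in the direction $e_m$ to produce slices $P_{t_\nu}$, $t_\nu\to0$, with $|t_\nu|\,\|f|V\cap P_{t_\nu}\|\to0$; by the dichotomy (2.1) there is an index $j_\nu$ with $\int_{V\cap P_{t_\nu}}|f|^2e^{-u_{j_\nu}}$ finite, comparable to $\|f|V\cap P_{t_\nu}\|^2$, and hence $o(|t_\nu|^{-2})$. Extending $f|_{P_{t_\nu}}$ to a fixed ball $B\ni x$ by Ohsawa--Takegoshi, with a constant that does not degenerate as the slice is translated, produces $F_\nu\in\cO(B)$ with $F_\nu|_{P_{t_\nu}}=f|_{P_{t_\nu}}$ and with $\int_B|F_\nu|^2e^{-u_{j_\nu}}$ again $o(|t_\nu|^{-2})$. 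Since $f-F_\nu$ vanishes on $\{z_m=t_\nu\}$, this exhibits $\mathbf{f}_x$, with quantitative control, in $I(u_{j_\nu})+(z_m-t_\nu)\cO_x$. Running this in every direction, and feeding in the inductive hypothesis on the slices $P_t\cong\bC^{m-1}$ to control the restrictions $f|_{P_t}$ for almost every $t$, the all-directions hypothesis should in the limit force $\mathbf{f}_x\in I(u_j)$ for a single $j$.

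The step I expect to be the main obstacle is this last passage, from ``$\mathbf{f}_x$ lies, with norm control, in $I(u_j)+(\ell)$ for slices approaching $x$ in every direction $\ell$'' to ``$\mathbf{f}_x\in I(u_j)$.'' Mere membership in $I(u_j)+(\ell)$ for all linear $\ell$ vanishing at $x$ does not imply membership in $I(u_j)$ — for instance $z_1z_2\in(z_1^2)+(\ell)$ for every such $\ell$ although $z_1z_2\notin(z_1^2)$, and $(z_1^2)$ is one of the ideals $I(u_j)$ can equal — so one has to genuinely use the quantitative Ohsawa--Takegoshi bounds together with a compactness argument over the directions, while keeping the indices $j_\nu$ bounded (so that the separate $e^{-u_{j_\nu}}$-bounds consolidate into a single $e^{-u_j}$-bound) and the extension constants uniform as $t_\nu\to0$; it is here that the inductive hypothesis in dimension $m-1$ is indispensable. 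A minor but pervasive nuisance is the dichotomy (2.1) itself: $\|\cdot\|$ can be $+\infty$ even where $\int|f|^2e^{-u}$ is finite, so one must always pass from $\|f|V\cap P_{t_\nu}\|<\infty$ to a genuine finite index $j_\nu$ rather than work with the limiting weight $u$.
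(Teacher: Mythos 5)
Your forward implication matches the paper's: Fubini plus the divergence of $\int|t|^{-2}$ near $0$. The converse is where the proposal breaks down, and the breakdown is not a repairable detail but a missing core idea.

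First, a structural misreading: the paper does \emph{not} prove Lemma~2.1 by induction on $m$. The induction on $m$ lives in the proof of Theorem~1.1, which invokes both directions of Lemma~2.1 as a black box. Lemma~2.1 itself is proved without any induction; its ``if'' direction is established in the contrapositive form: assuming $\mathbf f_x\notin J$, the paper produces a \emph{single} hyperplane direction $P_0$ along which (2.2) fails. You instead try to go forward, from (2.2) holding for all directions to $\mathbf f_x\in J$, and you correctly identify --- via the example $z_1z_2\in(z_1^2)+(\ell)$ --- that the information ``$\mathbf f_x$ lies in $I(u_{j_\nu})+(\ell)$ for all linear $\ell$'' is simply too weak. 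That is the obstacle you flag, and the paper's argument is precisely designed so that this obstacle never arises.

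The two ingredients you are missing are (i) the Lejeune--Jalabert--Teissier theorem on integral closure, and (ii) Proposition~2.2, the root-of-unity averaging estimate. The contrapositive starts from $\mathbf f_x\notin J$. Since $J$ is integrally closed (it is defined by an $L^2$ condition), the LT theorem provides a nonconstant holomorphic disc $\alpha\colon\Delta\to U$, $\alpha(0)=x$, such that $\mathbf f_x\circ\alpha\notin\cO_{(\bC,0)}\,J\circ\alpha$. One then picks $P_0$ not containing $\alpha(\Delta)$ and normalizes coordinates so that $\alpha_1(s)=s^k$. For a nearby slice $P_\sigma$, if $\|f|V\cap P_\sigma\|<\infty$, Ohsawa--Takegoshi (applied with $u_j$ for an appropriate $j$, using the dichotomy (2.1) exactly as you observe) produces $g\in\cO(V)$ agreeing with $f$ on $V\cap P_\sigma$ and with controlled $L^2(e^{-u_j})$ norm. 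Now comes the step your outline has no substitute for: $\mathbf g_x\in J$, so $G=g\circ\alpha$ has germ in $J\circ\alpha$, while $F=f\circ\alpha$ does not lie in $\cO_{(\bC,0)}\,J\circ\alpha$, which forces $G=o(F)$ at $0$. Since $\alpha_1(s)=s^k$, $G$ and $F$ agree at all $k$-th roots of $\sigma$, and Proposition~2.2 turns these two facts into the lower bound $\sup_{\overline\Delta}|G|\ge C_1/|\sigma|$. Combined with the sup-over-$L^2$ bound and the Ohsawa--Takegoshi bound, this yields $\|f|V\cap P_\sigma\|\gtrsim 1/|\sigma|$, i.e.\ (2.2) fails along $P_0$. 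Your direct approach, even with uniform OT constants and bounded $j_\nu$, has no mechanism comparable to the averaging in Proposition~2.2 for producing a lower bound of order $|\sigma|^{-1}$; without restriction to a curve where $J\circ\alpha$ is a principal ideal, the ``limit'' you hope for in the last paragraph does not consolidate. The $m=1$ base case you sketch also asserts an unproved one-variable equivalence that is essentially the content of the lemma itself in that dimension. In short: the forward direction is fine, but the converse needs the curve reduction and Proposition~2.2, and the inductive structure you impose belongs to Theorem~1.1, not to this lemma.
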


The lemma is of interest even when each $u_j=u$ (in fact, once Theorem 1.1 is proved, this is the only interesting case).
Indeed, assume $V$ is a polydisc centered at $x=0$ and $P_0=\{z_1=0\}$.
One direction of the lemma then says that, modulo shrinking, $\int_V |f|^2 e^{-u}<\infty$ provided
\[
\liminf_{s\to 0} |s|^{-2} \int_{V\cap \{z_1=s\}} |f|^2 e^{-u}=0.
\]
Since
\[
\int_V |f|^2 e^{-u}=\int_{\bC} \Big(\int_{V\cap \{z_1=s\}} |f|^2 e^{-u}\Big) d\lambda_2 (s),
\]
for certain type of functions $\varphi(s)$ the lemma provides the convergence of the integral $\int_{ \{|s|<r\}}\varphi(s) d\lambda_2 (s)$ once $\liminf_{s\to 0}\varphi(s)/|s|^2=0$ is known, a rather perplexing connection.

For the proof we need the following simple result, a variant of [GZ2, Lemma 2.3].
Let $\Delta\subset\bC$ denote the unit disc.

\begin{prop}Let $k\in\bN$ and let $F$ be a holomorphic function in a neighborhood of $\overline\Delta$, which does not vanish on $\overline\Delta\backslash \{0\}$.
Suppose $G\in\cO(\Delta)$, $G=o(F)$ at $0$, and with some $t\in\Delta\backslash \{0\}$
\[
F(\omega t)=G(\omega t)\text{ for all }k\text{'th roots of unity }\omega.
\]
Then
\begin{equation}
\sup_{\Delta} |G|\geq C_1 |t|^{-k},\quad C_1=\min_{|s|=1} |F(s)|>0.
\end{equation}
\end{prop}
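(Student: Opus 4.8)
The plan is to divide out $F$, exploit the $k$-fold symmetry of the resulting interpolation data, and finish with the Schwarz lemma. We may assume $\sup_\Delta|G|<\infty$, as otherwise there is nothing to prove. Since $F$ is holomorphic near $\overline\Delta$ and vanishes nowhere on $\overline\Delta\setminus\{0\}$, the quotient $g:=G/F$ is holomorphic on $\Delta\setminus\{0\}$; and since $G=o(F)$ at $0$ it is bounded near $0$, hence extends to $g\in\cO(\Delta)$ with $g(0)=0$. For every $k$'th root of unity $\omega$ the point $\omega t$ lies in $\Delta\setminus\{0\}$, so $F(\omega t)\ne0$ and the hypothesis $F(\omega t)=G(\omega t)$ turns into $g(\omega t)=1$. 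The point of passing to the quotient is precisely that the prescribed values become the \emph{constant} $1$, which is invariant under $z\mapsto\omega z$.

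Next I would symmetrize. Put $\bar g(z)=\frac1k\sum_{\omega^k=1}g(\omega z)$. Then $\bar g\in\cO(\Delta)$, $\sup_\Delta|\bar g|\le\sup_\Delta|g|$ by the triangle inequality, $\bar g(0)=0$, and $\bar g(t)=\frac1k\sum_{\omega^k=1}g(\omega t)=1$. Moreover $\bar g(\omega z)=\bar g(z)$ for every $k$'th root $\omega$, so only the powers $z^k,z^{2k},\dots$ occur in the Taylor expansion of $\bar g$; hence $\bar g(z)=\gamma(z^k)$ for some $\gamma\in\cO(\Delta)$, and since $z\mapsto z^k$ maps $\Delta$ onto $\Delta$ we have $\sup_\Delta|\gamma|=\sup_\Delta|\bar g|$.

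Now the Schwarz lemma applies to $\gamma$: from $\gamma(0)=\bar g(0)=0$ one gets $|\gamma(w)|\le\big(\sup_\Delta|\gamma|\big)|w|$ for $w\in\Delta$. Taking $w=t^k$, which lies in $\Delta$ since $|t|<1$, and using $\gamma(t^k)=\bar g(t)=1$, gives $1\le\big(\sup_\Delta|\gamma|\big)|t|^k$, so $\sup_\Delta|g|\ge\sup_\Delta|\gamma|\ge|t|^{-k}$. To return to $G=gF$, note $|F|\ge C_1$ on the unit circle, so for $\rho<1$ one has $\max_{|z|=\rho}|G|\ge\big(\min_{|z|=\rho}|F|\big)\max_{|z|=\rho}|g|$; letting $\rho\to1$, and using that $\min_{|z|=\rho}|F|\to C_1$ while $\max_{|z|=\rho}|g|$ increases to $\sup_\Delta|g|$, we obtain $\sup_\Delta|G|\ge C_1\sup_\Delta|g|\ge C_1|t|^{-k}$, which is the claimed inequality.

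I expect the only genuine difficulty to be the sharpness of the constant. A cruder route — writing $F-G=(z^k-t^k)R$ with $R\in\cO(\Delta)$, evaluating $R$ at $0$, and applying the maximum principle on circles $|z|=\rho$ with $\rho\to1$ — does yield a bound of the right order $|t|^{-k}$, but only up to an additive error governed by $\max_{\overline\Delta}|F|$, which this method cannot shed. It is the symmetrization together with the exact Schwarz inequality $|\gamma(w)|\le(\sup_\Delta|\gamma|)\,|w|$ that removes the loss; the remaining ingredients — removability of the singularity of $G/F$ at $0$, the symmetry forcing $\bar g(z)=\gamma(z^k)$, and the comparison $\sup_\Delta|G|\ge C_1\sup_\Delta|g|$ — are routine.
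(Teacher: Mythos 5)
Your proof is correct and at heart runs along the same lines as the paper's: symmetrize $G/F$ (or its substitute) over the $k$-th roots of unity, observe that the result depends only on $z^k$, and extract $|t|^{-k}$ from a Schwarz/maximum-modulus evaluation at $t$. The one genuine difference is in the initial reduction. The paper factors $F=s^pF_1$ with $F_1$ zero-free on $\overline\Delta$, divides by $F_1$ only, and then symmetrizes with the twist $\omega^{-p}$ so that the averaged function is still divisible by $s^{p+k}$; the constant $C_1$ then appears directly because $\min_{|s|=1}|F|=\min_{|s|=1}|F_1|$ and $F_1$ is bounded below on $\overline\Delta$. You divide by the whole of $F$, which eliminates the twist and lets you invoke the Schwarz lemma cleanly on $\gamma$ with $\bar g=\gamma(z^k)$, but since $|F|$ is not bounded below on $\Delta$ you must pay for it with the limiting argument $\rho\to1$ comparing $\max_{|z|=\rho}|G|$ with $\min_{|z|=\rho}|F|\cdot\max_{|z|=\rho}|g|$. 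Both routes produce the sharp constant because the evaluation at $t$ gives exactly $|t|^{-k}$; yours is arguably a touch more transparent conceptually, the paper's a touch shorter in the bookkeeping.

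One small point worth making explicit in your write-up: the extension of $g=G/F$ across $0$ uses not just $G=o(F)$ but also that $G/F$ is bounded on a full punctured neighbourhood of $0$, which follows since $G=o(F)$ and $F\ne 0$ off the origin; and in the final $\rho\to 1$ step you should note that $\sup_\Delta|g|<\infty$ whenever $\sup_\Delta|G|<\infty$ (because $|F|$ is bounded below on $\{1/2\le|z|<1\}$ and $g$ is continuous on $\{|z|\le 1/2\}$), so the product of the two limits is well defined. These are minor and you already gesture at them, but the argument is tight enough that it's worth saying.
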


\begin{proof}As in [GZ2] we start by writing $F(s)=s^p F_1(s)$, where $p\in\bN\cup \{0\}$ and $F_1$ does not vanish on $\overline\Delta$.
Upon dividing $F$ and $G$ by $F_1$ we reduce the proof to the case when $F(s)=s^p$.
Consider the function
\[
G_1(s)=\frac1k\sum_{\omega} \omega^{-p} G(\omega s),
\]
the sum over $k$'th roots of unity.
Our $G_1$ has all the properties listed above for $G$, and in addition, its Taylor series contains only monomials $s^q$ for which $q-p>0$ is divisible by $k$.
In particular, $q\geq p+k$, and so $G_1(s)/s^{p+k}$ is holomorphic on $\Delta$.
Hence
\[
\sup_{s\in\Delta} |G_1 (s)|= \sup_{s\in\Delta} |G_1 (s)/s^{p+k}|\geq |G_1(t)/t^{p+k}|=|t|^{-k},
\]
and (2.3) follows.
\end{proof}

\begin{proof}[ Proof of Lemma 2.1]
We will assume $x=0$.
Suppose first that $\mathbf f_0\in J$, and choose $j$ and a neighborhood $V$ of 0 so that $\int_V |f|^2 e^{-u_j}<\infty$.
Given $P_0$, change coordinates to arrange that $P_0$ is parallel to the hyperplane $\{z\in\bC^m\colon z_1=0\}$.
By Fubini's theorem
\begin{equation}
\infty > \int_V |f|^2 e^{-u_j}=\int_{\bC} \Big( \int_{V\cap \{z_1=\sigma\}} |f|^2 e^{-u_j}\Big) d\lambda_2 (\sigma).
\end{equation}
Since $\int |\sigma|^{-2} d\lambda_2 (\sigma)$ is a divergent integral over any neighborhood of $0\in\bC$, (2.4) implies
\[
\liminf_{\sigma\to 0} |\sigma|^2 \int_{V\cap \{z_1=\sigma\}} |f|^2 e^{-u_j}=0,
\]
and (2.2) follows.

Conversely, we will show that if $\mathbf{f}_0\not\in J$ then, given any neighborhood $V$ of 0, with some hyperplane $P_0$
\begin{equation}
\liminf\text{ dist}(x,P) \|f|V\cap P\| > 0,\quad\text{as } P\|P_0\text{ and dist}(x,P)\to 0.
\end{equation}

Fix such $V$, that we can assume to be pseudoconvex and relatively compact in $U$.
If $\alpha\colon\Delta\to U$ is holomorphic, $\alpha(0)=0$, we write $J\circ\alpha$ for the pull back
\[
\{\mathbf{g}\circ\alpha\colon \mathbf{g}\in J\}\subset \cO_{(\bC,0)}.
\]
Now $\mathbf{f}_0\not\in J$ implies there is a nonzero $\alpha$ such that $\mathbf{f}_0\circ\alpha\not\in \cO_{(\bC,0)}J\circ\alpha$, see [LT, Th\'eor\`eme 2.1], by Lejeune--Jalabert and Teissier, or [GZ2, Remark 2.12].
(What matters here is that $J$ is integrally closed, i.e., if $g^1,\ldots,g^p,\psi$ are holomorphic functions in a neighborhood of $0\in U$, the germs $\mathbf{g}_0^i$ are in $J$, and $\psi=O(|g^1|+\ldots+ |g^p|)$ at 0, then $\boldsymbol\psi_0\in J$.
Teissier tells me that he and Lejeune--Jalabert most probably learned the result from Hironaka.)
We choose a hyperplane $P_0$ through $0\in\bC^m$ that does not contain $\alpha(\Delta)$.
Upon adjusting the coordinates in $\bC$ and in $\bC^m$ we can assume that $P_0=\{z\in\bC^m\colon z_1=0\}$, that $\alpha=(\alpha_1,\ldots,\alpha_m)$ is holomorphic in a neighborhood of $\overline\Delta$, that $F=f\circ\alpha\neq 0$ on $\overline\Delta\backslash \{0\}$,
\begin{equation}
\alpha_1 (s)=s^k,\qquad s\in\Delta,
\end{equation}
and finally $\alpha(\overline\Delta)\subset V$.
This latter implies that there are constants $C_2,C'_2$ such that for $g\in\cO(V)$
\begin{equation}
\max_{\alpha(\overline\Delta)} |g|^2\leq C'_2 \int_V |g|^2\leq C_2^2 \int_V |g|^2 e^{-u}\leq C_2^2 \int_V |g|^2 e^{-u_j}
\end{equation}
for any $j$.
Write $P_\sigma$ for the hyperplane $\{z\in\bC^m\colon z_1=\sigma\}$.
We need to estimate $\|f|V\cap P_\sigma\|$ from below.
Take an arbitrary $\sigma\in\Delta\backslash \{0\}$.
Assume first that $\|f|V\cap P_\sigma\|<\infty$.
By the Ohsawa--Takegoshi theorem there is a $g\in\cO(V)$ that agrees with $f$ on $V\cap P_\sigma$ and with some $j$
\begin{equation}
\int_V |g|^2 e^{-u_j}\leq C_3^2 \|f|V\cap P_\sigma\|^2,
\end{equation}
where $C_3$ is independent of $j$ and $\sigma$.
Indeed, with some $j$
\[
\int_{V\cap P_\sigma} |f|^2 e^{-u_j}\leq 2\| f|V\cap P_\sigma\|^2,
\]
cf.~(2.1), and the Ohsawa--Takegoshi theorem, applied with this $u_j$, produces such a $g$.
Set $G=g\circ\alpha$, whose germ at $0\in\bC$ is in $J\circ\alpha$.
As $\mathbf{F}_0=\mathbf{f}_0\not\in \cO_{(\bC,0)}J\circ\alpha$, it must be that $G=o(F)$ at $0\in\bC$.
Further, by (2.6) $F(\root{k}\of \sigma)=G(\root{k}\of \sigma)$ for any choice of $k$th root $\root{k}\of \sigma$.
Hence Proposition 2.2 gives
\begin{equation}
\max_{\alpha(\overline\Delta)} |g|=\max_{\overline\Delta} |G| \geq C_1/|\sigma|.
\end{equation}
Putting together (2.8), (2.7), and (2.9)
\begin{equation}
\| f|V\cap P_\sigma\|\geq \frac {C_1}{ C_2 C_3 |\sigma|},\qquad \sigma\in\Delta\backslash \{0\}.
\end{equation}
This we derived under the assumption that the left hand side is finite, but of course (2.10) also holds when the left hand side is infinite.
(2.10) now implies (2.5), and the proof is complete.
\end{proof}

\begin{proof}[Proof of Theorem 1.1]
Since $I(u)\supset J$ and $I(u)$ is finitely generated, all we need to prove is that $\mathbf f_x\in I(u)$ implies $\mathbf f_x\in J$.
This we prove by induction on $m$, as in [GZ2].
The result is obvious when $m=0$; suppose it holds for $m-1$.
Upon shrinking $U$ we can assume $\mathbf f_x$ is the germ of some $f\in\cO (U)$.
First we apply the ``only if'' direction of Lemma 2.1, but with each $u_j$ replaced by $u$.
This provides a neighborhood $V_0\subset U$ of $x$ and for any hyperplane $P_0\subset\bC^m$ a sequence of hyperplanes $P_\nu\|P_0$ such that dist$(x,P_\nu)\to 0$ and
\begin{equation}
\lim_{\nu\to\infty}\text{ dist}(x,P_\nu)^2 \int_{V_0\cap P_\nu} |f|^2 e^{-u}=0.
\end{equation}

Let now $V$ be an arbitrary neighborhood of $x$, relatively compact in $V_0$.
Since $\int_{V_0\cap P_\nu}|f|^2 e^{-u}<\infty$ for $\nu>\nu_0$, for these $\nu$ the induction hypothesis gives a $j=j_\nu$ such that
\[
\int_{V\cap P_\nu} |f|^2 e^{-u_j}<\infty.
\]
Hence $\|f|V\cap P_\nu\|^2=\int_{V\cap P_\nu} |f|^2 e^{-u}$, cf.~(2.1).
Therefore (2.11) implies
\[
\lim_\nu\text{ dist}(x,P_\nu) \|f|V_0\cap P_\nu\|=0,
\]
and another application of Lemma 2.1, this time the ``if'' direction, proves $\mathbf f_x\in J$.
\end{proof}

\section{Smooth hermitian metrics and their curvature}

In this section we review the basics of hermitian metrics on holomorphic Hilbert bundles.
For the time being we restrict ourselves to trivial bundles $E=U\times H\to U$, with $U\subset\bC^m$ open and $(H,\langle ,\rangle)$ a complex Hilbert space.
We write End $H$ for the space of bounded linear operators on $H$, endowed with the operator norm.
If $k=0,1,\ldots,\infty$, a hermitian metric on $E$ of class $C^k$ is a function $h\colon E\oplus E\to\bC$ that can be represented as 
\begin{equation}
h\left( (z,\xi),(z,\eta)\right)=\langle P(z)\xi,\eta\rangle,\quad z\in U, \ \xi,\eta\in H,
\end{equation}
with $P\colon U\to\End H$ a $C^k$ map taking values in invertible, positive self adjoint operators.
If $e\in E$, we write $h(e)$ for $h(e,e)$.
Thus $\sqrt{h(e)}$ defines a norm on the fibers $E_z$ of $E$.
As usual, for two metrics $h\leq k$ means $h(e)\leq k(e)$ for all $e\in E$.

Just like in bundles of finite rank, a $C^2$ hermitian metric $h$ on $E$ has a curvature $R$, a $(1,1)$--form valued in End $E=\coprod_{z\in U}\End E_z$, see e.g.~[B2 or L3].
It is given by
\begin{equation}
R=\overline\partial (P^{-1}\partial P)=P^{-1} \overline\partial \partial P-P^{-1} \overline\partial P P^{-1}\wedge\partial P,
\end{equation}
where $P$ is from (3.1) and we identified $H=E_z$.
Curvature determines a hermitian form $N$ on each space $T_z^{1,0}U\otimes E_z=T_z^{1,0}U\otimes H$ (tensor product over $\bC$), given by
\begin{equation}
N(t\otimes\xi,u\otimes\eta)=h\left( R(t,\overline u)\xi,\eta\right),\quad t,u\in T_z^{1,0} U,\quad \xi,\eta\in E_z.
\end{equation}
Thus $N$ is a Hermitian form on $T^{1,0}U\otimes E$, that we call the Nakano curvature of $h$.
Instead of the usual terminology that $h$ has Nakano semipositive curvature we can then say that the Nakano curvature of $h$ is semipositive.---
If $\tau\in T^{1,0}U\otimes E$ again we write $N(\tau)$ for $N(\tau,\tau)$.

In general, given hermitian forms $M,M'$ on $T^{1,0}U\otimes E$ we write $M\geq M'$ if $M-M'$ is positive semidefinite.
Such forms  can be written
\[
M(\sum_\nu \frac{\partial}{\partial z_\nu}\otimes\xi_\nu)=\sum_{\mu,\nu} \langle M_{\mu\nu}\xi_\mu,\xi_\nu\rangle
\] 
with $M_{\mu\nu}\colon U\to\End H$ (or $M_{\mu\nu}$ sections of $\End E$).
For example, if $M=N$ is the Nakano curvature of the metric $h$ in (3.1), and $R=\sum R_{\mu\nu} dz_\mu\wedge d\overline z_\nu$, then $M_{\mu\nu}=PR_{\mu\nu}$.
Thus $N\ge 0$, or $N$ is semipositive if
\[
\sum_{\mu,\nu} \langle PR_{\mu\nu}\xi_{\mu},\xi_\nu\rangle\geq 0\qquad\text { for arbitrary }\xi_1,\ldots,\xi_m\in H.
\]

\section{Possibly singular hermitian metrics}

In this section we will introduce general hermitian metrics on not necessarily trivial Hilbert bundles, but first we discuss degenerations of norms on an arbitrary complex Banach space $B$.
Let $\|\ \|_1\leq \|\ \|_2\leq\ldots$ be a sequence of norms on $B$, each generating the topology of $B$.
For $x\in B$ set
\[
\|x\|=\lim_j \|x\|_j\leq\infty,\quad\text{ and }A=\{x\in X\colon \|x\|<\infty\}.
\]

\begin{prop}$A\subset B$ is a linear subspace and $(A,\|\ \|)$ is a Banach space.
\end{prop}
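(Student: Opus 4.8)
The plan is to check first that $(A,\|\ \|)$ is a normed vector space, and then that it is complete.

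For the algebraic part, I would begin by noting that for every $x\in B$ the sequence $\|x\|_j$ is nondecreasing, so $\|x\|=\lim_j\|x\|_j$ exists in $[0,\infty]$, and in particular $\|x\|_j\le\|x\|$ for all $j$. If $x,y\in A$ then $\|x+y\|_j\le\|x\|_j+\|y\|_j\le\|x\|+\|y\|$ for each $j$; letting $j\to\infty$ gives $\|x+y\|\le\|x\|+\|y\|<\infty$, so $x+y\in A$. Likewise $\|\lambda x\|=|\lambda|\,\|x\|<\infty$ for $\lambda\in\bC$ and $x\in A$, by passing to the limit in $\|\lambda x\|_j=|\lambda|\,\|x\|_j$. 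Hence $A$ is a linear subspace and $\|\ \|$ is a seminorm on it; it is in fact a norm, since $\|x\|\ge\|x\|_1>0$ for $x\neq 0$.

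For completeness I would first record an elementary observation: each $\|\ \|_j$, generating the topology of the Banach space $B$, is equivalent to the given norm of $B$ (two norms on a vector space inducing the same topology are equivalent, by homogeneity and translation invariance of the balls). Consequently $(B,\|\ \|_j)$ is itself a Banach space and the norms $\|\ \|_j$ are pairwise equivalent. Now let $(x_n)$ be a Cauchy sequence in $(A,\|\ \|)$. Since $\|\ \|_1\le\|\ \|$, it is Cauchy in the Banach space $(B,\|\ \|_1)$, hence converges to some $x\in B$; by the pairwise equivalence just noted, $x_n\to x$ in every $\|\ \|_j$ as well.

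It then remains to see that $x\in A$ and that $x_n\to x$ in $\|\ \|$. Given $\varepsilon>0$, I would choose $N$ with $\|x_m-x_n\|\le\varepsilon$ whenever $m,n\ge N$, so that $\|x_m-x_n\|_j\le\varepsilon$ for all $j$ and all such $m,n$. Fixing $n\ge N$ and $j$ and letting $m\to\infty$ (using that $x_m\to x$ in $\|\ \|_j$), the triangle inequality yields $\|x_n-x\|_j\le\varepsilon$; since this holds for every $j$, $\|x_n-x\|=\lim_j\|x_n-x\|_j\le\varepsilon$. In particular $x_n-x\in A$, whence $x=x_n-(x_n-x)\in A$, and $\|x_n-x\|\le\varepsilon$ for all $n\ge N$, i.e.\ $x_n\to x$ in $(A,\|\ \|)$. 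The only point that needs care is the one highlighted above — that the $\|\ \|_1$-limit of the Cauchy sequence is simultaneously its limit for every $\|\ \|_j$ — since this is what legitimizes the interchange of the limits in $m$ and in $j$; the rest is routine.
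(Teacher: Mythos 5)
Your proof is correct and follows essentially the same route as the paper's. In both arguments a Cauchy sequence in $(A,\|\ \|)$ is shown to converge in $B$ (hence in every $\|\ \|_j$, since each $\|\ \|_j$ generates the topology of $B$), and then one passes to the limit in $\mu$ before taking the supremum over $j$; the only difference is that you make explicit the standard fact that norms inducing the same topology are equivalent, which the paper leaves implicit in the phrase ``exists in the topology of $B$.''
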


\begin{proof}That $A$ is a subspace follows from the triangle inequality, and $\|\ \|$ is clearly a norm on $A$.
To check completeness, consider a Cauchy sequence $x_\nu$ in $(A,\| \ \|)$.
Then $x_\nu$ is Cauchy in $(B, \|\ \|_1)$ as well, hence $x=\lim x_\nu$ exists in the topology of $B$.
For any $j$
\begin{gather*}
\|x-x_\nu\|_j=\lim_{\mu\to\infty} \|x_\mu-x_\nu\|_j\leq\limsup_{\mu\to\infty} \|x_\mu-x_\nu\|,\text{ whence}\\
\|x-x_\nu\|\leq\limsup_{\mu\to\infty} \|x_\mu-x_\nu\|\to 0,\quad\text{ as }\nu\to\infty.
\end{gather*}
Thus $x\in A$ and $x_\nu\to x$ in $\|\ \|$.
\end{proof}

We will apply this construction to the fibers of a holomorphic Hilbert bundle $E\to X$ over a finite dimensional complex manifold.
Recall that a $C^k$ hermitian metric on $E$ is a function $h\colon E\oplus E\to\bC$ that in any local trivialization $E\simeq U\times H$ is a $C^k$ hermitian metric in the sense discussed in Section 3, cf. also [L3]. If $k\ge 2$, the Nakano curvature of $h$ is a hermitian form $N$ on $T^{1,0}X\otimes E$ that can be computed in local trivializations by (3.2), (3.3). Given a continuous hermitian metric $h$ on $E$ and a continuous real $(1,1)$ form $ia$ given in local coordinates as $i\sum a_{\mu\nu}dz_\mu\wedge d\overline z_\nu$, we define a hermitian form $a\otimes h$ on $T^{1,0}X\otimes E$ by
$$ 
(a\otimes h\big)(\sum\frac\partial{\partial z_\nu}\otimes\xi_\nu\big)=\sum a_{\mu\nu}h(\xi_\mu,\xi_\nu).
$$

\begin{defn}For the purposes of this paper a function $h\colon E\to [0,\infty]$ is called a hermitian metric if there is a sequence $h_1\leq h_2\leq\ldots$ of hermitian metrics of class $C^2$ on $E$ such that $h(e)=\lim_j h_j(e)$ for all $e\in E$.
Given a continuous real $(1,1)$ form $ia$ on $X$, we say that the Nakano curvature of $h$ dominates $a$ if the $h_j$ can be chosen to have Nakano curvature $N_j\geq a\otimes h_j$, i.e., for $\sum_\nu t_\nu\otimes\xi_\nu\in T^{1,0} X\otimes E$
\[
N_j (\sum_\nu t_\nu\otimes\xi_\nu)\geq \sum_{\mu,\nu} a(t_\mu,\overline t_\nu) h_j (\xi_\mu,\xi_\nu).
\]
\end{defn}

The definition raises the obvious question whether for the Nakano curvature of  a $C^2$ hermitian metric semipositivity (as in Section 3) is the same as dominating $0$. I do not know if the answer is yes or no, but either outcome would have interesting consequences.
The notion of a possibly singular hermitian metric on a vector bundle of finite rank has already been proposed by de Cataldo, Berndtsson--Pa\v un, and Raufi [BP,dC,R].
Those notions are more general than ours.
At the same time, of these authors only de Cataldo defines what Nakano positivity should mean for such a metric $h$ or what it should mean that the Nakano curvature dominates a form $a$, and in this he is more restrictive than Definition 4.2.
In his definition, like in ours, $h$ should be the increasing limit of $C^2$ hermitian metrics $h_j$ with lower estimates on their Nakano curvature, but he requires additionally that on an open subset of $X$ of full measure the $h_j$ should converge in the $C^2$ topology. (In other respects his definition is less restrictive than ours, namely in what sort of lower estimates are required on the Nakano curvature of $h_j$. We chose the condition in our definition because it is easy to formulate.) 

By Proposition 4.1, a hermitian metric $h$ defines in each fiber $E_x$ a Hilbert space $F_x=\{e\in E_x\colon h(e)<\infty\}$, endowed with the norm $\sqrt{h(e)}$.
The $F_x$ together form a field of Hilbert spaces $F=\coprod_{x\in X} F_x\to X$, see \cite{G}, a notion more general than a Hilbert bundle.

Let $dV$ be a positive, continuous volume form on $X$ and $h, h_j$ as in Definition 4.2.
For any measurable section $f$ of $E$
\[
\int_X h(f) dV=\lim_{j\to\infty} \int h_j (f) dV=\sup_j\int h_j (f) dV,
\]
and we denote by $\|f\|=\|f\|_h\leq\infty$ the square root of this quantity.
We define $L^2(X,h)=L^2 (X,h,dV)$ as the space of $f$ for which $\|f\|<\infty$.
The norm $\|\ \|$ on $L^2(X,h)$ is a Hilbertian norm, and $(L^2 (X,h), \|\ \|)$ is complete, as Proposition 4.1 shows.
Since $\|f\|^2\geq \int_X h_1 (f) dV$, for holomorphic sections convergence in $L^2(X,h)$ implies locally uniform convergence.
Hence holomorphic sections form a closed subspace of $L^2(X,h)$.

Fix now a continuous hermitian metric on $X$ (i.e., on $T^{1,0}X$) with volume form $dV$.
If $\varphi$ is an $E$ valued $(p,q)$ form, we define its norm $|\varphi|=|\varphi|_h\colon X\to [0,\infty]$ in the following way.
Suppose $x\in X$, $z_1,\ldots,z_m$ are local coordinates at $x$, and $\varphi=\sum \varphi_{IJ} dz^I\wedge d\overline z^J$.
Then
\begin{equation}
|\varphi|^2 (x)=|\varphi|^2_h(x)=\sum_{I,J} h(\varphi_{IJ} (x))\leq\infty,
\end{equation}
provided $\partial/\partial z_\nu$ form an orthonormal basis of $T_x^{1,0} X$.
If $\varphi$ is measurable, we put
\begin{equation}
\|\varphi\|=\|\varphi\|_h=\Big(\int_X |\varphi|^2 dV\Big)^{1/2}\leq\infty.
\end{equation}
Clearly
\begin{equation}
\|\varphi\|_h =\lim_{j\to\infty} \|\varphi\|_{h_j}=\sup_j \|\varphi\|_{h_j}.
\end{equation}
We define $L_{pq}^2 (X,h)$ as the space of $\varphi$ such that $\|\varphi\|_h < \infty$; thus $L_{00}^2 (X,h)=L^2 (X,h)$.
Again $L_{pq}^2 (X,h)$ with the norm $\|\ \|_h$ is a Hilbert space.

Consider a Hilbert space $(H, \langle ,\rangle)$ and functions $f,f_j\colon X\to H$.
We say that $f_j\to f$ uniformly weakly if $\langle f_j,v\rangle \to \langle f,v\rangle$ uniformly for every $v\in H$.
For a trivialized Hilbert bundle $E=X\times H\to X$ we define uniform weak convergence of sections accordingly.
Finally, sections $f_j$ of a general Hilbert bundle $E\to X$ converge locally uniformly weakly to a section $f$ if in some neighborhood of each $x\in X$ the bundle has a trivialization in which $f_j\to f$ uniformly weakly.
In general, uniform weak convergence in one trivialization over some $U\subset X$ does not imply uniform weak convergence in some other trivialization, even over relatively compact $V\subset U$, but if the limit section is locally bounded, it does.

\begin{lem}Let $E\to X$ be a holomorphic Hilbert bundle, $h_1\leq h_2\leq\ldots \to h$ hermitian metrics on it, and suppose $\varphi_j\in L^2 (X,h_j)$.
If $\sup_j \|\varphi_j\|_{h_j} < \infty$, and $\varphi_j-\varphi_1$ is holomorphic for each $j$, then a subsequence of $\varphi_j$ will converge locally uniformly weakly to a section $\varphi$ such that $\|\varphi\|_h\leq\sup_j \|\varphi_j\|_{h_j}$ and $\varphi-\varphi_1$ is holomorphic.
\end{lem}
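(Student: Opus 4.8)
The plan is to run a normal--families argument on the holomorphic differences $\psi_j:=\varphi_j-\varphi_1$ (so $\psi_1=0$), extract a locally uniformly weakly convergent subsequence, and then deduce the required $L^2$ bound for its limit from weak lower semicontinuity of the Hilbert norms $\|\ \|_{h_i}$ combined with the exhaustion $h_i\uparrow h$. Put $M:=\sup_j\|\varphi_j\|_{h_j}<\infty$ and cover $X$ by countably many coordinate charts $U_k$ over which $E\simeq U_k\times H$. The first step is a local bound: on any $V\Subset U_k$ the $C^2$ metric $h_1$ satisfies $h_1(e)\ge c\|e\|_H^2$, hence $\int_V\|\varphi_j\|_H^2\,dV\le c^{-1}\|\varphi_j\|_{h_1}^2\le c^{-1}\|\varphi_j\|_{h_j}^2\le c^{-1}M^2$ since $h_1\le h_j$; adding the fixed $\int_V\|\varphi_1\|_H^2\,dV$ bounds $\int_V\|\psi_j\|_H^2\,dV$ uniformly in $j$, and since $\psi_j$ is $H$--valued holomorphic the mean value property upgrades this to $\|\psi_j(z)\|_H\le C_W$ on each $W\Subset V$, uniformly in $j$.

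Next I would extract the subsequence. Working in a fixed trivialization over $U_k$, the countably many $\psi_j$ take values in a separable closed subspace $H_0\le H$; for $v$ ranging over a countable dense subset of $H_0$ the scalar functions $\langle\psi_j,v\rangle$ are holomorphic and locally uniformly bounded, so Montel's theorem together with a diagonal argument over $v$ and over the $U_k$ produces a subsequence, still written $\psi_j$, along which $\langle\psi_j,v\rangle$ converges locally uniformly for every $v\in H_0$, hence, by density and the uniform bound, for every $v\in H$. For fixed $z$ the functional $v\mapsto\lim_j\langle\psi_j(z),v\rangle$ is bounded linear, so Riesz gives $\psi(z)\in H$ with $\langle\psi(z),v\rangle=\lim_j\langle\psi_j(z),v\rangle$; these local limits glue into a section (pointwise weak limits transform correctly under the holomorphic cocycle of $E$), which is holomorphic because it is locally bounded and weakly holomorphic. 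Setting $\varphi:=\varphi_1+\psi$, we have $\varphi-\varphi_1=\psi$ holomorphic and $\varphi_j=\varphi_1+\psi_j\to\varphi$ locally uniformly weakly.

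It remains to check $\|\varphi\|_h\le M$. Since $\int_X h(\varphi)\,dV=\sup_i\int_X h_i(\varphi)\,dV$, it suffices to show $\int_X h_i(\varphi)\,dV\le M^2$ for each fixed $i$. For $j\ge i$ one has $\|\varphi_j\|_{h_i}\le\|\varphi_j\|_{h_j}\le M$ as $h_i\le h_j$, so $\{\varphi_j\}_{j\ge i}$ is bounded in the Hilbert space $L^2(X,h_i)$ and a further subsequence converges weakly there to some $\tilde\varphi$. I would identify $\tilde\varphi$ with $\varphi$ by testing: for smooth $\chi$ compactly supported in a chart, $v\in H$, and $P_i$ the invertible positive operator representing $h_i$ as in (3.1), the section $g=\bar\chi P_i^{-1}v$ lies in $L^2(X,h_i)$ with $\langle\varphi_j,g\rangle_{h_i}=\int\chi\langle\varphi_j,v\rangle\,dV$, and along that further subsequence this converges both to $\langle\tilde\varphi,g\rangle_{h_i}=\int\chi\langle\tilde\varphi,v\rangle\,dV$ (weak $L^2$ convergence) and to $\int\chi\langle\varphi,v\rangle\,dV$ (locally uniform weak convergence); since such $\chi$ and $v$ separate $L^2$ sections, $\tilde\varphi=\varphi$ a.e. Weak lower semicontinuity of $\|\ \|_{h_i}$ then gives $\int_X h_i(\varphi)\,dV=\|\varphi\|_{h_i}^2=\|\tilde\varphi\|_{h_i}^2\le\liminf_j\|\varphi_j\|_{h_i}^2\le M^2$.

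I expect the main obstacle to be precisely the identification $\tilde\varphi=\varphi$: it must reconcile the three modes of convergence at play --- locally uniform weak convergence (what the normal--families step provides and what the conclusion demands), weak convergence in each fixed Hilbert space $L^2(X,h_i)$ (which supplies the lower semicontinuity of the norm), and the exhaustion $h_i\uparrow h$. One must also keep in mind throughout that $\varphi_1\in L^2(X,h_1)$ need not lie in $L^2(X,h_j)$, so it is the holomorphic differences $\psi_j$, rather than the $\varphi_j$ themselves, that carry global control; this is what forces working with the decomposition $\varphi_j=\varphi_1+\psi_j$.
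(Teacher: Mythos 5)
Your normal-families setup on the differences $\psi_j=\varphi_j-\varphi_1$, the diagonal extraction over a countable dense set of $H$, and the pointwise Riesz-representation step essentially reproduce the paper's argument (the paper phrases it as weak sequential compactness of balls in $H$ plus Arzel\`a--Ascoli, but the content is the same). One cosmetic slip: you call $h_1$ ``the $C^2$ metric,'' but in Lemma 4.3 the $h_j$ are hermitian metrics in the sense of Definition 4.2, so $h_1$ can be singular; the paper fixes this by choosing a genuinely $C^2$ metric $k\le h_1$ and running the mean-value bound against $k$.

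The real gap is in your $L^2$ bound. You test the weak $L^2(X,h_i)$ limit against sections of the form $g=\bar\chi P_i^{-1}v$, where ``$P_i$ is the invertible positive operator representing $h_i$ as in (3.1).'' But $h_i$ is a possibly singular metric in the sense of Definition 4.2, so it has no such representation: its fiberwise quadratic form can take the value $\infty$ and need not come from a bounded invertible operator. The test functions you want simply do not exist in general, so the identification $\tilde\varphi=\varphi$ --- which you correctly flag as the delicate point --- cannot be carried out as written. You could salvage the route by approximating each $h_i$ from below by $C^2$ metrics $k_1\le k_2\le\cdots\to h_i$ (as Definition 4.2 provides), running your entire argument with $k_p$ in place of $h_i$, and then letting $p\to\infty$ via (4.3); for $k_p$ the operator in (3.1) does exist and your test sections make sense. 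The paper, however, avoids the identification problem altogether: since $\varphi_{j_i}(x)\to\varphi(x)$ weakly in each fiber and $k_p$ is a bounded equivalent inner product there, weak lower semicontinuity of the fiber norm gives $k_p(\varphi(x))\le\liminf_i k_p(\varphi_{j_i}(x))$ pointwise, and Fatou's lemma upgrades this directly to $\|\varphi\|_{k_p}\le\liminf_i\|\varphi_{j_i}\|_{k_p}\le s$. That is both shorter and sidesteps weak convergence in the $L^2$ Hilbert spaces entirely; you should adopt it, or at least be aware that the $L^2$-weak route needs the $C^2$ regularization before your test functions are legal.
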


\begin{proof}We can assume $E=X\times H$ is trivialized.
Thus (holomorphic) sections of $E$ are in one--to--one correspondence with (holomorphic) functions $X\to H$, and one can talk about uniform boundedness and equicontinuity of a family of sections.
For example, if we choose a $C^2$ hermitian metric $k\leq h_1$ on $E$, since $\sup_j \|\varphi_j\|_k < \infty$, the holomorphic sections $\varphi_j-\varphi_1$ are locally uniformly bounded and by Cauchy's formula, locally uniformly equicontinuous.
Now closed balls in $H$ are (even sequentially) compact in the weak topology.
Hence for each $x\in X$ the sequence $\varphi_j(x)-\varphi_1(x)$ contains a weakly convergent subsequence.
The Arzel\`a--Ascoli theorem therefore provides a locally uniformly weakly convergent subsequence $\varphi_{j_i}\to\varphi$.
For any $v\in H$ the function $\langle\varphi-\varphi_1,v\rangle=\lim\langle\varphi_{j_i}-\varphi_1,v\rangle$ is holomorphic, whence $\varphi-\varphi_1$ is holomorphic (see e.g.~[M, Exercise 8E], whose solution rests on Cauchy's formula and the principle of uniform boundedness).

To estimate $\|\varphi\|$, let $s=\sup_j \|\varphi_j\|_{h_j}$.
Fix $j$ and choose a sequence $k_1\leq k_2\leq\ldots$ of $C^2$ hermitian metrics that converge to $h_j$.
For any $x\in X$ and $p=1,2,\ldots$, in the Hilbert space $(\{x\}\times H, k_p)$ the sequence $\varphi_{j_i}(x)$ weakly converges to $\varphi(x)$.
Hence
\begin{eqnarray*}
k_p (\varphi(x))&\leq&\liminf_{i\to\infty} k_p (\varphi_{j_i}(x))\quad\text{ and }\\
\|\varphi\|_{k_p}&\leq&\liminf_{i\to\infty} \|\varphi_{j_i}\|_{k_p}\leq s
\end{eqnarray*}
by Fatou's lemma.
Therefore $\|\varphi\|_{h_j}\leq s$ and $\|\varphi\|\leq s$ by (4.3), and the proof is complete.
\end{proof}

\section{The $\overline\partial$--equation and H\"ormander--Skoda type estimates}

Consider a holomorphic Hilbert bundle $E$ over an $m$ dimensional complex manifold $X$.
We denote by $\cD^{pq}(X,E)$ the space of compactly supported smooth $E$ valued $(p,q)$ forms.
The Cauchy--Riemann operator $\overline\partial_E\colon \cD^{pq} (X,E)\to\cD^{p,q+1} (X,E)$ can be defined as in bundles of finite rank, for instance using local trivializations, see e.g.~[M or L1].
It can be extended to an operator defined on a larger subspace of $L^1_{pq,\loc} (X,E)$.
By the latter space we mean the following.
Take a continuous hermitian metric $h_0$ on $E$ and a smooth volume form $dV$ on $X$.
Then a measurable $E$ valued $(p,q)$ form $\varphi$ is in $L^1_{pq,\loc}(X,E)$ if
\[
\int_C |\varphi|_{h_0} dV<\infty\text{ for any compact }C\subset X,
\]
$|\varphi|_{h_0}$ defined in (4.1).
Clearly $L^1_{pq,\loc}(X,E)$ is independent of the choice of $h_0$ and $dV$.

Let ( , ) denote the fiberwise pairing between $E$ and its dual $E^*$.
If $\sum\varphi_{IJ} dz^I\wedge d\overline z^J$ and $\sum \sigma_{KL} dz^K\wedge d\overline z^L$ are local expressions of $E$, respectively $E^*$ valued forms $\varphi,\sigma$, put
\[
(\varphi,\sigma)=\sum_{I,J,K,L}(\varphi_{IJ},\sigma_{KL}) dz^I \wedge d\overline z^J\wedge dz^K\wedge d\overline z^L.
\]
Given $\varphi\in L^1_{pq,\loc} (X,E)$ and $\psi\in L^1_{p,q+1,\loc}(X,E)$, we write $\overline\partial\varphi=\psi$ if for any $\sigma\in\cD^{m-p,m-q-1}(X,E^*)$
\[
\int_X (\varphi,\overline\partial_{E^*}\sigma)=-\int_X (\psi,\sigma).
\]
If such a $\psi$ exists, it is uniquely determined a.e.~by $\varphi$.
Further, if $\varphi\in\cD^{pq}(X,E)$ then $\overline\partial\varphi$ and $\overline\partial_E\varphi$ agree.

In this section we will reproduce the by now standard $L^2$ estimate, essentially due to H\"ormander and Skoda, and streamlined and generalized by Demailly [D1,H,S], for solving $\overline\partial$ in the setting of Hilbert bundles.
Our setting is slightly more general than the one in [D1, VIII.~Theorem 6.1] because we allow bundles $E\to X$ of infinite rank and possibly singular metrics.

Fix a smooth hermitian metric on $X$ and a hermitian metric on $E$.
In addition to the Hilbert spaces $L^2_{pq} (X,h)$ and norms $\|\varphi\|_h$ introduced in Section 4, see (4.2), we will need one more piece of notation.
Consider a continuous $(1,1)$ form $a$ on $X$, $ia\geq 0$.
If $\varphi$ is an $E$ valued $(p,1)$ form, $0\leq p\leq m$, its weighted norm $|\varphi|_{h,a}\colon X\to [0,\infty]$ is defined as follows.
If $x\in X$, choose local coordinates $z_1,\ldots,z_m$ at $x$ in which $a$ diagonalizes:\ $a=\sum a_{\nu\nu} dz_\nu\wedge d\overline z_\nu$ at $x$.
Then
\begin{equation}
|\varphi|^2_{h,a} (x)=\sum_\nu a_{\nu\nu}^{-1}|i_ {\partial/\partial\overline z_\nu}\varphi|_h^2(x)\in [0,\infty],
\end{equation}
where $i_ {\partial/\partial\overline z_\nu}$ stands for contraction.

Recall that a complex manifold $X$ is weakly pseudoconvex if it admits a smooth plurisubharmonic exhaustion function $X\to [0,\infty)$.

\begin{thm}Let $(X,\omega)$ be an $m$ dimensional weakly pseudoconvex K\"ahler manifold, $ia\geq 0$ a continuous $(1,1)$ form on $X$, $E\to X$ a holomorphic Hilbert bundle, and $h$ a hermitian metric on $E$.
Suppose that the Nakano curvature of $h$ dominates $a$, cf.~Definition 4.2.
Given $\psi\in L^1_{m1,\loc}(X,h)$,
\[
\overline\partial\psi=0\qquad\text{ and }\qquad\int_X |\psi|^2_{h,a} \omega^m < \infty,
\]
there exists a $\varphi\in L^2_{m0}(X,h)$ such that
\begin{equation}
\overline\partial\varphi=\psi\qquad\text{ and }\qquad\int_X |\varphi|_h^2 \omega^m\leq\int_X |\psi|^2_{h,a} \omega^m.
\end{equation}
\end{thm}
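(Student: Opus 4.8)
The plan is to deduce the statement from the classical H\"ormander--Demailly $L^2$ estimate with the help of the approximation apparatus set up in Section~4.

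First I would dispose of the case that $h$ is itself of class $C^2$ with Nakano curvature $N\ge a\otimes h$. Then the assertion is Demailly's theorem [D1, VIII.~Theorem 6.1], proved in the usual way: the Bochner--Kodaira--Nakano identity, combined with the curvature hypothesis and with a smooth plurisubharmonic exhaustion of the weakly pseudoconvex K\"ahler manifold $(X,\omega)$, yields for $E$ valued $(m,1)$ forms $\beta$ in the domain of $\overline\partial^*$ the a priori inequality
\[
|\langle\psi,\beta\rangle_h|^2\le\Big(\int_X|\psi|^2_{h,a}\,\omega^m\Big)\,\|\overline\partial^*\beta\|_h^2
\]
(one uses $\overline\partial\psi=0$ to pass from $\beta\in\ker\overline\partial$ to arbitrary $\beta$ in the domain of $\overline\partial^*$), after which Hahn--Banach and the Riesz representation theorem produce a $\varphi\in L^2_{m0}(X,h)$ with $\overline\partial\varphi=\psi$ and $\int_X|\varphi|_h^2\,\omega^m\le\int_X|\psi|^2_{h,a}\,\omega^m$. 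For $\mathrm{rk}\,E<\infty$ this is literally the cited theorem; for $\mathrm{rk}\,E=\infty$ one verifies that the same proof goes through, Bochner--Kodaira--Nakano being a fiberwise identity and the functional analysis taking place in the Hilbert spaces $L^2_{m0}(X,h)$ and $L^2_{m1}(X,h)$.

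Next I would reduce the general case to this one. Choose, as Definition~4.2 permits, $C^2$ metrics $h_1\le h_2\le\cdots$ with $h=\lim_j h_j$ and Nakano curvatures $N_j\ge a\otimes h_j$. Since $h_j\le h$, the given $\psi$ lies in $L^1_{m1,\loc}(X,h_j)$, still has $\overline\partial\psi=0$, and $\int_X|\psi|^2_{h_j,a}\,\omega^m\le\int_X|\psi|^2_{h,a}\,\omega^m<\infty$; so the smooth case applied to $h_j$ furnishes $\varphi_j\in L^2_{m0}(X,h_j)$ with $\overline\partial\varphi_j=\psi$ and $\int_X|\varphi_j|^2_{h_j}\,\omega^m\le\int_X|\psi|^2_{h,a}\,\omega^m$. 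Hence $\sup_j\|\varphi_j\|_{h_j}<\infty$, while $\overline\partial(\varphi_j-\varphi_1)=0$ forces $\varphi_j-\varphi_1$, an $E$ valued $(m,0)$ form, to be holomorphic. Regarding the $\varphi_j$ as sections of the holomorphic Hilbert bundle $K_X\otimes E$ equipped with the metrics induced by the $h_j$ and $\omega$, Lemma~4.3 now delivers a subsequence converging locally uniformly weakly to a section $\varphi$ with $\|\varphi\|_h\le\sup_j\|\varphi_j\|_{h_j}$ and $\varphi-\varphi_1$ holomorphic. Thus $\varphi\in L^2_{m0}(X,h)$ and $\int_X|\varphi|^2_h\,\omega^m\le\int_X|\psi|^2_{h,a}\,\omega^m$; and since $\varphi-\varphi_1$ is holomorphic its distributional $\overline\partial$ vanishes, so $\overline\partial\varphi=\overline\partial\varphi_1=\psi$. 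This is exactly (5.2).

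The two reductions are essentially formal once Section~4 is in hand; all the substance is in the smooth estimate. The one point there that is not a pure citation is the passage from finite to infinite rank, and I expect the main work to consist in checking that each ingredient of Demailly's proof --- the Bochner--Kodaira--Nakano identity, the regularization on the weakly pseudoconvex base (replacing $\omega$ by a complete K\"ahler metric and back, with the attendant density of compactly supported forms in the graph norms), and the solvability argument --- survives verbatim when the fibers are an arbitrary complex Hilbert space rather than a finite-dimensional one. That is where I would concentrate the care.
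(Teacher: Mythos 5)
Your proposal is correct and follows essentially the same route as the paper: first Demailly's $L^2$ estimate [D1, VIII.~Theorem 6.1] in the $C^2$ case, checked to persist for Hilbert bundles of infinite rank (the paper makes the operator-$A$ computation explicit, amounting to the bound $h(A^{-1}\psi,\psi)\le|\psi|^2_{h,a}$, which is what lets one write your a priori inequality), and then the passage to singular $h$ by solving with $h_j$, observing $\varphi_j-\varphi_1$ is holomorphic, viewing the $\varphi_j$ as sections of $K\otimes E$, and invoking Lemma~4.3 to extract a locally uniformly weak subsequential limit with the desired norm bound.
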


\begin{proof}First we assume that $h$ is of class $C^2$ and its Nakano curvature satisfies for $\sum t_\nu\otimes\xi_\nu\in T^{1,0} X\otimes E$
\begin{equation}
N(\sum_\nu t_\nu\otimes\xi_\nu)=\sum h\big(R(t_\mu,t_\nu)\xi_\mu,\xi_\nu\big)\geq \sum_{\mu,\nu} a(t_\mu,\overline t_\nu) h(\xi_\mu,\xi_\nu).
\end{equation}
It is straightforward if perhaps tedious to check that the theory expounded in Demailly's book [D1], Chapters VII and VIII is valid in Hilbert bundles of infinite rank.
In particular, Theorem 6.1 in Chapter VIII holds for such bundles.
The hypotheses of that theorem are obviously satisfied now, except possibly the one which involves an operator $A$ on $\bigwedge^{\bullet,\bullet} T^*X\otimes E$, which we will have to check.
Only the action of $A$ on $\bigwedge^{m,1} T^* X\otimes E$ matters here. This was computed in [D1, VII.(7.1)], and
in our notation it can be given as follows.
Let $z_1,\ldots,z_m$ be local coordinates at $x\in X$ such that $\partial/\partial z_\nu$ form a basis in $T_x^{1,0} X$, orthonormal for the inner product induced by $\omega$.
If the curvature of $h$ is $R=\sum R_{\mu\nu} dz_{\mu} \wedge d\overline z_\nu$,
then
\[
h(A\psi,\psi)=\sum_{\mu,\nu} h(R_{\mu\nu} \psi_\mu,\psi_\nu),\quad \psi=\sum \psi_\nu dz\wedge d\overline z_\nu\in\bigwedge^{m,1} T_x^* X\otimes E_x,
\]
where $dz$ stands for $dz_1\wedge\ldots\wedge dz_m$.
If additionally we choose the coordinates so that $a=\sum a_{\nu\nu} dz_\nu\wedge d\overline z_\nu$ at $x$, then by (5.3)
\[
h(A\psi,\psi)\geq\sum_\nu a_{\nu\nu} h(\psi_\nu)=\sum a_{\nu\nu} |i_ {\partial/\partial z_\nu}\psi|_h^2.
\]
Hence if now $\psi$ is an $E$ valued $(m,1)$ form, at $x$
\[
h(A^{-1}\psi,\psi)\leq \sum_\nu a_{\nu\nu}^{-1} |i_ {\partial/\partial\overline z_\nu}\psi|_h (x)=|\psi|_{h,a}^2 (x)
\]
by (5.1).

The estimates in [D1, VIII.~Theorems 6.1 and 4.5] are formulated in terms of $h(A^{-1}\psi,\psi)$, but clearly any greater function will also do.
Replacing $\langle A^{-1}\psi,\psi\rangle=h(A^{-1}\psi,\psi$) in Demailly's formulae by $|\psi|_{h,a}^2$ we obtain Theorem 4.1 when $h$ is of class $C^2$.

A general $h$ is the increasing limit of $C^2$ hermitian metrics $h_j$ whose Nakano curvatures dominate $a$.
By what we have just seen, there are $\varphi_j\in L^2_{m0}(X,h_j)$ such that 
\[
\overline\partial \varphi_j=\psi\quad\text{and}\quad \|\varphi_j\|_{h_j}\leq\int_X |\psi|^2_{h_j,a}\omega^m\leq\int_X |\psi|^2_{h,a}\omega^m.
\]

Denoting the canonical bundle of $X$ by $K$, we can view $\varphi_j$ as sections of $K\otimes E$ and apply Lemma 4.3 with $E$ replaced by $K\otimes E$.
Any subsequential weak limit $\varphi$ will then satisfy (5.2). 
\end{proof}

\section{An extension theorem of Ohsawa-Takegoshi type}

The original publication [OT] of the Ohsawa-Takegoshi extension theorem sparked a lot of interest, various generalizations and alternative approaches have been proposed.
Here we prove an extension for Hilbert bundles following an idea of Bo-Yong Chen [C], see also [Bl].
Guan and Zhou in \cite{GZ1} already proved an extension theorem for finite rank vector bundles.
Undoubtedly their proof could be generalized to Hilbert bundles as well, but Chen's approach is the simplest of all.

At the heart of all extension proofs are estimates for the solution of an equation
\begin{equation}
\overline\partial\varphi=\psi.
\end{equation}
In his estimations Chen is inspired by an idea of Berndtsson that first appeared in \cite{B1} and then in \cite{BC}.
Berndtsson's idea, in a context different from extensions, was as follows.
Given (in \cite{B1,BC} a scalar valued) form $\psi$, suppose we find a solution $\varphi$ whose $L^2$ norm with respect to a certain weight is minimal.
This means that $\varphi$ is orthogonal to Ker $\overline\partial$ in some weighted $L^2$ space.
If $u$ is a bounded function, then $e^u\varphi$ will still be orthogonal to Ker $\overline\partial$, albeit with respect to a modified weight.
So it will be the minimal solution of the ``twisted'' equation
\begin{equation}
\overline\partial (e^u\varphi)=e^u (\overline\partial u\wedge \varphi+u\psi).
\end{equation}
If the weights involved are plurisubharmonic, then one can therefore use H\"ormander's estimate (really, [D1, VIII.~Theorem 6.1]) to bound the solution $e^u\varphi$ of (6.2) in terms of the right hand side.
True, this bound will involve $\varphi$ itself, but if $\overline\partial u$ is sufficiently small, then the bound can be turned into one that involves $\psi$ only, and provides much stronger estimates on $\varphi$ then what follows directly from (6.1).
To carry out this plan, Berndtsson assumed $\sup |\overline\partial u|_{\partial\overline\partial u}<1$.
What Chen noticed was that useful estimates may follow even if $\sup|\overline\partial u|_{\partial\overline\partial u}=1$, and he produced a $u$ that will do the trick for the $\overline\partial$ equation that arises in the extension problem.

Before introducing our version of the extension theorem we have to develop some notation.
Suppose $X$ is a smooth manifold, $Y\subset X$ a smooth submanifold, and $r\colon X\to [0,\infty)$ is a $C^3$ function, $Y=r^{-1}(0)$.
Suppose further that, denoting by dist the distance induced by some Riemannian metric on $X$
\begin{equation}
r(x)\geq c_0\text{ dist}^2 (x,Y),
\end{equation}
$c_0$ a positive constant.
Given such an $r$, any continuous volume form $d\mu$ on $X$ induces a volume form $d\mu_r$ on $Y$ as follows.
Suppose $\theta\in C(Y)$ is compactly supported.
Extend it to a compactly supported $\tilde\theta\in C(X)$.
Then $d\mu_r$ will satisfy
\begin{equation}
\int_Y \theta d\mu_r=\lim_{\varepsilon\to 0}\varepsilon^{-\text{codim}_{\bR}Y}
\int_{\{x\in X\colon r(x) < \varepsilon^2\}} \tilde\theta d\mu.
\end{equation}
Locally $d\mu_r$ can be computed if we introduce local coordinates $x_1,\ldots,x_l$ on $X$ so that $r=\sum_1^k x_i^2$.
If $d\mu=\alpha dx_1\ldots dx_l$ and $c_k$ is the volume of the unit ball in $\bR^k$,
then $d\mu_r=c_k\alpha dx_{k+1}\ldots dx_l$.

In the theorem below we will deal with a hermitian holomorphic Hilbert bundle $(E,h)$ over a K\"ahler manifold $(X,\omega)$.
We write $K$ for the canonical bundle of $X$ and $h^K$ for the metric on $K\otimes E$ induced by $h$ and $\omega$.
That is, if $z_1,\ldots,z_m$ are local coordinates at $x\in X$ such that $\partial/\partial z_\nu$ form an orthonormal basis in $T_x^{1,0}X$, then
\[
h^K (dz_1\wedge \ldots\wedge dz_m\otimes\xi)=h(\xi),\qquad \xi\in E_x.
\]
In other words, if we view a section $g$ of $K\otimes E$ as an $E$ valued $(m,0)$ form (and we will), then with notation (4.1)
\begin{equation}
h^K (g)=|g|_h^2.
\end{equation}

\begin{thm}Let $(X,\omega)$ be a weakly pseudoconvex $m$ dimensional K\"ahler manifold, $Y\subset X$ a complex submanifold of dimension $m-c$, $r\colon X\to [0,1/(2e^2)]$ a $C^3$ function that vanishes on $Y$ and satisfies (6.3).
Define a volume form $d\mu_r$ on $Y$ by (6.4) using the volume form $\omega^m$ on $X$.
Suppose that $\log r$ is plurisubharmonic.
Let furthermore $E\to X$ be a holomorphic Hilbert bundle with a hermitian metric $h$ whose Nakano curvature dominates $0$.
If $f$ is a holomorphic section of $K\otimes E$ over some neighborhood $U\subset X$ of $Y$ and
\[
\int_Y h^K (f) d\mu_r < \infty,
\]
then there is a holomorphic section $g$ of $K\otimes E$ such that $f=g$ on $Y$ and
\begin{equation}
\int_X\  \frac{h^K(g)}{ r^c\log^2 r}\ \omega^m\leq 4^{4+c} \int_Y h^K (f) d\mu_r.
\end{equation}
\end{thm}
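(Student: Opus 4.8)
The plan is to follow Chen's twisting approach to the Ohsawa-Takegoshi extension theorem, combined with Demailly's treatment of the singular weight $r^{-c}$ (whose exponent $c\log r$ is plurisubharmonic), and to reduce the infinite-rank case to the smooth one via Lemma 4.3. For the reduction, note first that, because $r\le 1/(2e^2)$, a short one-variable computation shows $r^c\log^2 r$ is increasing on $(0,1/(2e^2)]$, hence bounded, so $1/(r^c\log^2 r)\ge c_1>0$ on $X$ and $h/(r^c\log^2 r)\ge c_1 h$. Writing $h=\lim_j h_j$ with $h_j$ increasing, of class $C^2$, and of Nakano curvature $\ge 0$, and granting the theorem for metrics of this kind, one obtains holomorphic sections $g_j$ of $K\otimes E$ with $g_j=f$ on $Y$ and $\int_X h_j^K(g_j)\,\omega^m/(r^c\log^2 r)\le 4^{4+c}\int_Y h_j^K(f)\,d\mu_r\le 4^{4+c}\int_Y h^K(f)\,d\mu_r$. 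The metrics $h_j/(r^c\log^2 r)$ increase to $h/(r^c\log^2 r)$, dominate the $C^2$ metric $c_1 h_1$, and $g_j-g_1$ is holomorphic, so Lemma 4.3, applied to $K\otimes E$ with these metrics, yields a subsequential locally uniformly weak limit $g$ that is holomorphic, equals $f$ on $Y$, and satisfies (6.7). Thus we may assume $h$ is of class $C^2$ with Nakano curvature $\ge 0$; the weak pseudoconvexity of $X$ enters only through Theorem 5.1, so no further exhaustion is needed.

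For such an $h$, fix $\delta_0>0$ with $\{r\le\delta_0\}\subset U$, a cutoff $\chi\colon[0,\infty)\to[0,1]$ with $\chi\equiv 1$ on $[0,1/2]$ and $\chi\equiv 0$ on $[1,\infty)$, and for $0<\delta\le\delta_0$ put $\tilde f_\delta=\chi(r/\delta)f$, extended by $0$; then $\tilde f_\delta$ is a smooth section of $K\otimes E$ with $\overline\partial\tilde f_\delta=\chi'(r/\delta)\,\delta^{-1}\,\overline\partial r\wedge f$ supported in the shell $S_\delta=\{\delta/2\le r\le\delta\}$, which avoids $Y$. Putting $t=-\log r$, I would solve $\overline\partial\varphi_\delta=\overline\partial\tilde f_\delta$ by the solution of minimal $L^2$-norm for a weight that is a function of $t$ (truncated near $Y$, since $r^{-c}$ is not locally integrable, and passing to the limit as the truncation is removed), then pass to the twisted equation $\overline\partial(e^{u}\varphi_\delta)=e^{u}(\overline\partial u\wedge\varphi_\delta+\overline\partial\tilde f_\delta)$ for Chen's twist $u=u(t)$. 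Here $u$ and the weight are to be chosen so that $|\overline\partial u|_{\partial\overline\partial u}\le 1$, so that the weight against which $e^u\varphi_\delta$ is minimal is plurisubharmonic (this is where $\log r$ plurisubharmonic and the Nakano curvature $\ge 0$ enter, together with the $C^3$-regularity of $r$ to keep the weights $C^1$), and so that, after applying Theorem 5.1 to $e^u\varphi_\delta$ and absorbing the $\overline\partial u\wedge\varphi_\delta$ term on the left (legitimate because $|\overline\partial u|_{\partial\overline\partial u}\le 1$, the borderline equality being handled by a limiting argument in an auxiliary parameter), the estimate becomes $\int_X h^K(\varphi_\delta)\,\omega^m/(r^c\log^2 r)\le C\int_{S_\delta}|\overline\partial\tilde f_\delta|^2_{h,b}\,\omega^m/(r^c\log^2 r)$ for a suitable positive $(1,1)$-form $b$. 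To make $g_\delta:=\tilde f_\delta-\varphi_\delta$ restrict to $f$ on $Y$ one also arranges that $\varphi_\delta$ vanish on $Y$ (for instance by replacing the weight by $r^{-c}$, suitably interpolated, on the set $\{r<\delta/4\}$, where $\overline\partial\tilde f_\delta$ vanishes and $\varphi_\delta$ is hence holomorphic; this changes neither the right-hand side nor, in the limit $\delta\to 0$, the left).

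Because $\overline\partial\tilde f_\delta$ is concentrated at level $r\sim\delta$, the definition (6.4) of $d\mu_r$ turns the right-hand side above, as $\delta\to 0$, into $4^{4+c}\int_Y h^K(f)\,d\mu_r$ once the one-variable functions $u$, the weight, and $\chi$ are chosen optimally. Combined with the elementary bound $\int_X h^K(\tilde f_\delta)\,\omega^m/(r^c\log^2 r)\le(\text{const})\int_Y h^K(f)\,d\mu_r$ (again from (6.4), using that $\int_0^{\delta_0}ds/(s\log^2 s)<\infty$), this gives $\int_X h^K(g_\delta)\,\omega^m/(r^c\log^2 r)$ bounded uniformly in $\delta$; moreover $g_\delta$ is holomorphic and equals $f$ on $Y$. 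A final application of Lemma 4.3 along a sequence $\delta\to 0$ produces the desired $g$, and the constant tightens to $4^{4+c}$ as the auxiliary quantities are optimized.

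The main obstacle is the construction of Chen's weights in the middle step and the verification of the twisted curvature inequality with the sharp constant $4^{4+c}$. The target weight corresponds to the exponent $-c\log r-2\log(-\log r)$, whose second summand is plurisuperharmonic, so the resulting curvature deficit must be absorbed by the twist rather than by adding positivity; balancing this against the requirements that $e^u\varphi_\delta$ remain square integrable, that $\varphi_\delta$ still vanish on $Y$, and that the right-hand side collapse exactly to $4^{4+c}\int_Y h^K(f)\,d\mu_r$, is the delicate point. The infinite-rank features add nothing new, being absorbed already by Theorem 5.1 and Lemma 4.3, and the K\"ahler hypothesis is used only through the Bochner--Kodaira identity underlying Theorem 5.1.
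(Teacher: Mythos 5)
Your outline follows the same strategy as the paper: Chen's twist, Theorem 5.1 for the $\overline\partial$-equation, cutoffs concentrated in the shell $\{r\approx\delta\}$, and Lemma 4.3 to pass from approximate to exact solutions. But the core of the proof is precisely the part you defer. You write that the weight $u$ is ``to be chosen so that $|\overline\partial u|_{\partial\overline\partial u}\le 1$'' and that the ``borderline equality'' is ``handled by a limiting argument in an auxiliary parameter.'' This misdescribes the mechanism and would not produce the theorem. If $|\overline\partial u|^2_{\partial\overline\partial u}$ were allowed to reach $1$, the term $\overline\partial u\wedge\varphi$ could not be absorbed and the $L^2$ estimate would give nothing; what is actually needed, and what the paper's explicit choice $\rho=-\log(r+\var^2)$, $\eta=\rho+\log\rho$, $u=-\log\eta$ delivers, is the quantitative gap $|\overline\partial u|^2_{\partial\overline\partial u}\le 1-\tfrac1{4\rho}$ (and a much better bound $\le 16/\eta$ on the shell where $\psi$ lives). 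That gap, of order $1/|\log r|$, multiplied by $e^u\sim 1/|\log r|$, is exactly the source of the $\log^2 r$ in the denominator of (6.6) and of the constant $4^{4+c}$. The choice of $u$ is therefore not a technicality to be optimized at the end: it is the theorem, and leaving it as ``the main obstacle'' leaves the proof unstarted at its center.

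Two smaller points. First, your claim that ``the weak pseudoconvexity of $X$ enters only through Theorem 5.1, so no further exhaustion is needed'' is not quite right: to identify $\lim_{\var\to 0}\var^{-2c}\int_{\Omega_\var}h^K(f)$ with $\int_Y h^K(f)\,d\mu_r$ via (6.4) one needs control near $\partial X$, and the paper handles this by first working on smoothly bounded relatively compact sublevel sets of a plurisubharmonic exhaustion with $Y$ meeting the boundary transversely, and then exhausting (another application of Lemma 4.3 plus a diagonal argument). Second, your auxiliary claim that vanishing of $\varphi_\delta$ on $Y$ is ``arranged by replacing the weight by $r^{-c}$ near $Y$'' is unnecessary complication: once one solves $\overline\partial\varphi=\psi$ in $L^2(X,h^K/r^c)$ and notes $\psi=0$ near $Y$, the vanishing $\varphi|Y=0$ is automatic from the nonintegrability of $1/r^c$ along $Y$, and this is the form the argument should take.

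Your reduction to $C^2$ metrics $h$ via Lemma 4.3 is essentially correct and matches the paper's (brief) reduction, and the elementary observation that $r^c\log^2 r$ is bounded on $(0,1/(2e^2)]$ is fine. But without the explicit construction and estimation of Chen's weight --- the analogues of (6.13)--(6.19) --- the proposal does not constitute a proof.
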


\begin{proof}We start by considering a holomorphic Hilbert bundle $F\to X$ with a hermitian metric $k$, an upper semicontinuous $u\colon X\to [-\infty,\infty)$, and the hermitian metric $k'=e^{-u}k$.
When $k,u$ are (finite and) of class $C^2$, one can compute that the curvatures $R,R'$ of $k$ and $k'$ are related by
\[
R'=R+I\partial\overline\partial u,
\]
$I$ denoting the identity endomorphism of $F$.
Hence the Nakano curvatures $N,N'$ satisfy
\begin{eqnarray}
N'(\sum_\nu t_\nu\otimes\xi_\nu)&=&\sum_{\mu,\nu} k' \left( R'(t_\mu,\overline t_\nu)\xi_\mu,\xi_\nu \right)\nonumber\\
&=& e^{-u} N (\sum_\nu t_\nu\otimes\xi_\nu)+\sum_{\mu,\nu}\partial\overline\partial u(t_\mu,\overline t_\nu) k'(\xi_\mu,\xi_\nu).
\end{eqnarray}
It follows that if the Nakano curvature of $k$ is semipositive and $u$ is plurisubharmonic, then the Nakano curvature of $k'$ is also semipositive.
By approximation, this will also hold for possibly singular metrics $k$ and plurisubharmonic functions $u$ that are decreasing limits of $C^2$ plurisubharmonic functions.
(6.7) also shows that if the Nakano curvature of a hermitian metric $k$ dominates $0$ and $u\in C^2(X)$, then the Nakano curvature of $k'$ will dominate $\partial\overline\partial u$.

Now let us put ourselves in the setting of Theorem 6.1.
All integrals of functions on $X$ and on open subsets of $X$ will be with respect to the volume form $\omega^m$; for brevity, from now on we will omit the volume form from the integrals.
Since $h$ can be approximated from below by $C^2$ hermitian metrics, we can assume $h$ is already such.
We will also assume that $X$ is a smoothly bounded, relatively compact open subset of a complex manifold $X_0$, that $\omega$ extends to a smooth K\"ahler form $\omega_0$ on $X_0$, that $Y=Y_0\cap X$, where $Y_0\subset X_0$ is a complex submanifold intersecting $\partial X$ transversely; that $(E,h)$ extends to a holomorphic hermitian Hilbert bundle (also denoted $(E,h)$) over $X_0$, and that $f$ extends to a holomorphic section of $E\otimes K_{X_0}$ over a neighborhood $U_0\subset X_0$ of $Y_0$.
Once this special case is handled, the general case will follow.
We would take a smooth plurisubharmonic exhaustion function $\rho\colon X\to [0,\infty)$, solve the extension problem on generic sublevel sets of $\rho$ and apply Lemma 4.3 combined with a diagonal selection procedure.

With all the extra assumptions above, we fix a smooth function $\chi\colon [0,\infty)\to [0,1]$,
\begin{equation*}
\chi(t)=
\begin{cases}
1,&\text{if $t<1/4$}\\
0,&\text{if $t>1$}
\end{cases}
, \qquad |\chi'(t)|\leq 2 \text{ for all } t.
\end{equation*}
Set, for $\var>0$,
\[
\Omega_\var=\{x\in X\colon r(x) < \var^2\}.
\]
When $\var$ is sufficiently small, $\Omega_\var\subset U_0$, so that 
\[
f'=f'_\var=\begin{cases}  \chi(r/\var^2)f&\text{on $\Omega_\var$}\\
0&\text{on $X\setminus \Omega_\var$}\end{cases}
\]
is a smooth section of $K\otimes E$. Let $\psi=\overline\partial f'$, a smooth closed $K\otimes E$ valued $(0,1)$ form.
Presently we will check that the equation $\overline\partial\varphi=\psi$ has a solution $\varphi=\varphi_\var\in L^2 (X, h^K/r^c)$.
Accepting this for the moment, $g=f'-\varphi\in\Gamma (X,K\otimes E)$. Hence $\varphi$ is smooth and
\begin{equation}
\varphi|Y=0,
\end{equation}
because $1/r^c$ is nowhere integrable at $Y$.
Thus $g$  agrees with $f$ on $Y$, and what we need to do is estimate it.
Since we can freely add to $\varphi$ any holomorphic section in $L^2(X,h^K/r^c)$, we can arrange that $\varphi$ is orthogonal to the closed subspace of holomorphic sections in $L^2(X,h^K/r^c)$.
It follows that with any $u\in C^2(\overline X)$ the section $\theta=e^u\varphi$ is orthogonal to holomorphic sections in $L^2(X,e^{-u} h^K/r^c)$.---This latter space is the same as $L^2(X,h^K/r^c)$ but their inner products are different.---Therefore $\theta$ is the solution of
\begin{equation}
\overline\partial\theta=\overline\partial (e^u\varphi)=e^u (\varphi\overline\partial u+\psi)
\end{equation}
that has minimal norm in $L^2(X,e^{-u} h^K/r^c)$.

Let us abbreviate $e^{-u} h/r^c=k$.
As we observed above, the Nakano curvature of $h/r^c=e^{-c\log r}h$ dominates $0$ (note that $\log(r+1/j)\searrow\log r$), and so the Nakano curvature of $k$ dominates $\partial\overline\partial u$.
We will choose a plurisubharmonic $u$ so that
\begin{equation}
\int_X |\varphi\overline\partial u|^2_{k^K,\partial\overline\partial u}\quad ,\quad \int_X |\psi|^2_{k^K,\partial\overline\partial u}<\infty.
\end{equation}
Viewing $\varphi\overline\partial u$ and $\psi$ as $E$ valued $(m,1)$ forms, the integrands above are the same as 
$|\varphi\overline\partial u|^2_{k,\partial\overline\partial u}$ and $|\psi|^2_{k,\partial\overline\partial u}$, hence by Theorem 5.1 and (6.9) we would conclude
\begin{gather*}
\int_X k^K(\theta)\leq\int_X |e^u (\varphi\overline\partial u+\psi)|^2_{k^K,\partial\overline\partial u},\quad\text{ or}\\
\begin{align*}
\int_X \frac{e^u}{ r^c} h^K (\varphi)\leq \int_{X\backslash\Omega_\var}&\frac{e^u}{r^c}|\varphi\overline\partial u|^2_{h^K,\partial\overline\partial u}\\
&+2 \int_{\Omega_\var}\frac{e^u}{ r^c} |\varphi\overline\partial u|^2_{h^K,\partial\overline\partial u}
+2\int_{\Omega_\var}\ \frac{e^u}{ r^c}|\psi|^2_{h^K,\partial\overline\partial u}
\end{align*}\end{gather*}
Since $|\varphi\overline\partial u|^2_{h^K,\partial\overline\partial u}=h^K(\varphi)|\overline\partial u|^2_{\partial\overline\partial u}$, putting
\begin{equation}
\lambda(x)=\begin{cases}
1-|\overline\partial u|^2_{\partial\overline\partial u}(x),&\text{if $x\in X\backslash\Omega_\var$}\\
1-2|\overline\partial u|^2_{\partial\overline\partial u}(x),&\text{if $x\in\Omega_\var$}\end{cases},
\end{equation}
we obtain
\begin{equation}
\int_X \lambda \ \frac{e^u}{ r^c}\ h^K (\varphi)\leq 2 \int_X\ \frac{e^u}{ r^c} |\psi|^2_{h^K,\partial\overline\partial u}.
\end{equation}

Remains to prove that $\overline\partial\varphi=\psi$ indeed has a solution in $L^2(X,h^K/r^c)$ and to choose $u$ so that (6.10) holds and (6.12) implies the estimate (6.6) for $g=f'-\varphi$.

Following Chen's idea in a similar setting, we make sure that $\var<1/(2e)$ and let 
\[
\rho=-\log (r+\var^2),\qquad \eta=\rho+\log\rho,\quad\text{and}\quad u=-\log\eta.
\]
Thus $-\rho,-\eta$, and $u$ are plurisubharmonic and $C^3$,
\[
2<\rho<\eta<2\rho\qquad\text{and }\qquad u<0.
\]
We need to estimate $|\overline\partial u|_{\partial\overline\partial u}$. This will take  a little bit of computation:
\begin{eqnarray}
\overline\partial\rho&=&-\frac{\overline\partial r}{r+\var^2},\quad \partial\overline\partial\rho=\frac{\partial r\wedge\overline\partial r}{(r+\var^2)^2}-\frac{\partial\overline\partial r}{r+\var^2},\\
\eta\overline\partial u&=&-\overline\partial\eta=-\big(1+\frac1\rho\big)\overline\partial\rho=
\big(1+\frac1\rho\big)\frac{\overline\partial r}{r+\var^2},\\
 i\eta^2\partial\overline\partial u&=&i\Big( \big(1+\frac1{\rho}\big)^2+\frac{\eta}{\rho^2}\Big)\partial\rho\wedge\overline\partial\rho-i\eta\big(1+\frac 1{\rho}\big)\partial\overline\partial\rho\\
&\geq& i\Big( \big(1+\frac 1{\rho}\big)^2+\frac 1{\rho}\Big) \partial\rho\wedge\overline\partial\rho.\nonumber
\end{eqnarray}

From (6.13), (6.15), on $X$
\begin{equation}
i\partial r\wedge\overline\partial r\leq \frac{i\eta^2(r+\var^2)^2}{ (1+1/\rho)^2+1/\rho}\ \partial\overline\partial u,\quad
|\overline\partial r|^2_{\partial\overline\partial u}\leq\frac {\eta^2(r+\var^2)^2}{ (1+1/\rho)^2+1/\rho}.
\end{equation}
However, on $\Omega_\var$ the estimate can be improved.
Indeed, (6.13) gives, when $r<\var^2$
\begin{align*}
(r+\var^2)^2 (-i\partial\overline\partial\rho-i\partial\rho\wedge\partial\overline\rho)=i(r+\var^2)\partial\overline\partial r&-2i\partial r\wedge\overline\partial r\\
&\geq2i (r\partial\overline\partial r-\partial r\wedge\overline\partial r)\geq 0,
\end{align*}
the latter simply expressing that $i\partial\overline\partial\log r\geq 0$.
Hence by (6.13), (6.15)
\[
i\partial r\wedge\overline\partial r=i(r+\var^2)^2 \partial\rho\wedge\overline\partial\rho\leq -4i\var^4 \partial\overline\partial\rho\leq 4 i\var^4 \eta\partial\overline\partial u,
\]
so that $|\overline\partial r|^2_{\partial\overline\partial u}\leq 4\var^4\eta$.
Therefore
\begin{equation}
|\overline\partial u|^2_{\partial\overline\partial u}=\frac{(1+1/\rho)^2}{\eta^2(r+\var^2)^2}\ |\overline\partial r|^2_{\partial\overline\partial u}\leq \frac {16}{\eta}\quad\text{on }\Omega_\var,
\end{equation}
in view of (6.14).
On supp$\ \psi\subset \Omega_\var\backslash \Omega_{\var/2}$ we can estimate
\begin{equation*}
|\psi|^2_{h^K,\partial\overline\partial u}= |f\chi' (r/\var^2) \overline\partial r/\var^2|^2_{h^K,\partial\overline\partial u}
\leq 4 h^K (f)|\overline\partial r|^2_{\partial\overline\partial u}/\var^4\leq 16 \eta h^K (f),
\end{equation*}
and so 
\begin{equation}
\int_X\ \frac{e^u}{ r^c}\ |\psi|^2_{h^K,\partial\overline\partial u}\leq \frac{4^{2+c}}{\var^{2c}}\int_{\Omega_\var\backslash \Omega_{\var/2}} h^K(f)<\infty.
\end{equation}
We apply Theorem 5.1 with the hermitian metric $e^{-u}h/r^c$ to obtain a solution $\varphi\in L^2(X,e^{-u} h^K/r^c)=L^2(X,h^K/r^c)$ of the equation $\overline\partial\varphi=\psi$, which we choose to have the minimal norm in $L^2(X,h^K/r^c)$.

Comparing (6.14) and (6.16) we find
\begin{equation}
|\overline\partial u|^2_{\partial\overline\partial u}= \frac{(1+1/\rho)^2}{ \eta^2(r+\var^2)^2} |\overline\partial r|^2_{\partial\overline\partial u} \leq \frac{(\rho+1)^2}{ (\rho+1)^2+\rho} \leq  1-\frac 1 {4\rho},
\end{equation}
as $\rho>2$; so that
\begin{equation*}
\int_X \frac{e^u}{ r^c} |\varphi\overline\partial u|_{h^K,\partial\overline\partial u}=\int_X \frac{e^u}{ r^c} h^K (\varphi)|\overline\partial u|_{\partial\overline\partial u} \leq  \int_X \ \frac{e^u}{ r^c} \ h^K (\varphi)<\infty.
\end{equation*}
This latter and (6.18) prove (6.10), and it follows that $\varphi$ satisfies (6.12).
Looking up the definition of $\lambda$, (6.11), and comparing it with (6.17), (6.19) we obtain
\[
\lambda\geq\begin{cases}
1/4\rho&\text{on $X\backslash \Omega_\var$}\\
1/2&\text{on $\Omega_\var$}\end{cases}\ \geq \frac 1{ 4\rho}\geq \frac 1 {4|\log r|}
\]
when $\var>0$ is sufficiently small.
We also note that $e^u=1/\eta\geq 1/(2\log r)$.
Putting all this in (6.12) and taking (6.18) into account estimates $\varphi=\varphi_\var$:
\begin{equation}
\int_X\ \frac{h^K(\varphi_\var)}{ r^c\log^2 r}\leq \frac{4^{4+c}}{\var^{2c}}\ \int_{\Omega_\var}\ h^K (f).
\end{equation}

Now we let $\var\to 0$.
To estimate the limit on the right, set $\Omega_\var^0=\{x\in X_0\colon r(x) < \var^2\}$ and choose a compactly supported continuous function $\sigma\colon X_0\to [0,1]$ such that $\sigma|X\equiv 1$.
Then
\[
\limsup_{\var\to 0} \var^{-2c}\int_{\Omega_\var} h^K (f)\leq\limsup_{\var\to 0} \var^{-2c} \int_{\Omega^0_\var}\sigma h^K (f)=\int_{Y_0} \sigma h^K (f) d\mu_r.
\]
Approximating the characteristic function of $\overline X$ by such $\sigma$, this last integral gets as close to $\int_Y h^K(f) d\mu_r$ as we please, whence by (6.20)
\[
\limsup_{\var\to 0}\ \int_X \ \frac{h^K(\varphi_\var)}{ r^c\log^2 r}\leq 4^{4+c}\int_Y h^K (f) d\mu_r.
\]
At the same time, since $1/(r^c\log^2 r)$ is integrable on $X$,
\[
\lim_{\var\to 0}\ \int_X  \frac{h^K(f'_\var)}{ r^c\log^2 r}=\lim_{\var\to 0}\ \int_{\Omega_\var} \frac{h^K\left(\chi (r/\var^2) f\right)}{ r^c\log^2 r}=0.
\]
Therefore $g_\var= f'_\var-\varphi_\var$ is holomorphic, $g_\var |Y=f|Y$, and
\[
\limsup_{\var\to 0}\ \int \ \frac{h^K(g_\var)}{ r^c\log^2 r}\leq 4^{4+c}\int_Y h^K (f) d\mu_r.
\]
By  Lemma 4.3 a subsequential weak limit $g$ of $g_\var$ will then satisfy the requirements.
\end{proof}

\section{The proof of Theorem 1.2}

Let $U\subset\bC^m$ be open and $E\to U$ a holomorphic Hilbert bundle as in Theorem 1.2.
Let furthermore $h_1\geq h_2\geq\ldots$ be hermitian metrics with Nakano curvatures dominating $0$, and assume that $h=\lim_j h_j$ is  bounded below by a continuous hermitian metric.
Much like in Section 2, if $P\subset\bC^m$ is a complex affine hyperplane, $W\subset P\cap U$ is relatively open, and $g$ is a measurable section of $E|W$, we define $\|g\|\in [0,\infty]$ by
\begin{eqnarray}
\|g\|^2&=&\inf_j \int_W h_j (g),\quad\text{so that}\nonumber\\
\|g\|^2&=&\begin{cases} \infty\qquad\text{or}\\ \lim_j \int_W h_j (g)=\int_W h(g).\end{cases}
\end{eqnarray}
Let $x\in U$.
First we prove the following characterization of $M=M_x=\bigcup_j E (h_j,x)$.

\begin{lem}Suppose $M$ is finitely generated as an $\cO_x$-module.
The germ $\mathbf{f}_x$ of an $f\in\Gamma(E)$ belongs to $M$ if and only if for any sufficiently small neighborhood $V\subset U$ of $x$ and any hyperplane $P_0\subset\bC^m$
\begin{equation}
\liminf{\rm{dist}}(x,P)\|f|V\cap P\|=0,\quad\text{as }P\|P_0\text{ and }{\rm{dist}}(x,P)\to 0.
\end{equation}
\end{lem}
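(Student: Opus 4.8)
The plan is to follow the proof of Lemma~2.1 essentially line for line, with three substitutions: the scalar Ohsawa--Takegoshi theorem is replaced by Theorem~6.1; the integral closedness of the ideal $J$ is replaced by that of the module $M$, invoked via the module version of the Lejeune-Jalabert--Teissier curve criterion used in Lemma~2.1; and Proposition~2.2 is replaced by a vector-valued substitute obtained through a finite-dimensional reduction. Normalize $x=0$ and, shrinking $U$, assume $f\in\Gamma(U,E)$ and that $E|_U\simeq U\times H$ is trivialized. The ``only if'' direction is then the scalar argument verbatim: if $\mathbf f_0\in M$, choose $j$ and a neighborhood $V$ of $0$ with $\int_V h_j(f)<\infty$, change linear coordinates so that $P_0\parallel\{z_1=0\}$ (which preserves the hypothesis that the Nakano curvature of $h_j$ dominates $0$), and apply Fubini on the slices $P_\sigma=\{z_1=\sigma\}$: since $|\sigma|^{-2}$ is non-integrable near $0\in\bC$, $(2.4)$-type reasoning forces $\liminf_{\sigma\to0}|\sigma|^2\int_{V\cap P_\sigma}h_j(f)=0$, and $(7.3)$ follows from $(7.1)$.

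For the converse I argue contrapositively: if $\mathbf f_0\notin M$, then for every sufficiently small polydisc $V\Subset U$ about $0$ there is a hyperplane $P_0$ with $\liminf\dist(0,P)\|f|V\cap P\|>0$. Fix such a $V$. First, $M$ is integrally closed in the module sense: if $\mathbf g^1_0,\dots,\mathbf g^p_0\in M$ and a section $\psi$ near $0$ satisfies $\|\psi\|\le C\sum_i\|g^i\|$ pointwise near $0$, then $\boldsymbol\psi_0\in M$ --- choose $j$ with all $\mathbf g^i_0\in E(h_j,0)$ and use that the $C^2$ metric $h_j$ is two-sidedly comparable to a flat metric on $\overline V$ to get $\int h_j(\psi)\le C'\sum_i\int h_j(g^i)<\infty$ near $0$. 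Hence, by the module version of the curve criterion (as in Lemma~2.1, cf.\ [LT, Th.~2.1] and [GZ2, Remark~2.12]), there is a nonzero holomorphic $\alpha\colon\Delta\to U$, $\alpha(0)=0$, with $\mathbf f_0\circ\alpha\notin\cO_{(\bC,0)}(M\circ\alpha)=:N$; here $N$ is the $\cO_{(\bC,0)}$-module generated by the pullbacks $\boldsymbol\Gamma_i=\mathbf g^i_0\circ\alpha$ of a finite generating set $g^1,\dots,g^p$ of $M$, so $N$ is finitely generated. As in Section~2, pick $P_0$ through $0$ with $\alpha(\Delta)\not\subset P_0$ and adjust coordinates so that $P_0=\{z_1=0\}$, $\alpha$ is holomorphic near $\overline\Delta$, $\alpha_1(s)=s^k$, and $\alpha(\overline\Delta)\subset V$; note $F:=f\circ\alpha\not\equiv0$ since $0\in N$. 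Subharmonicity of $|g|^2$ for holomorphic $H$-valued $g$, compactness of $\alpha(\overline\Delta)$ in $V$, and $h_j\ge h\ge(\text{continuous metric})\ge\var|\cdot|^2$ on $\overline V$, give $\max_{\alpha(\overline\Delta)}|g|^2\le C_2^2\int_V h_j(g)$ for all holomorphic sections $g$ of $E$ over $V$, with $C_2$ independent of $j$.

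Now fix $\sigma\in\Delta\setminus\{0\}$, $\|f|V\cap P_\sigma\|<\infty$ (the case $=\infty$ being trivial). Choose $j$ with $\int_{V\cap P_\sigma}h_j(f)\le2\|f|V\cap P_\sigma\|^2$ (possible by $(7.1)$) and apply Theorem~6.1 on $X=V$ with the $C^2$ metric $h_j$ (Nakano curvature dominates $0$), $Y=V\cap P_\sigma$, $c=1$, and $r(z)=|z_1-\sigma|^2$ --- so $\log r$ is plurisubharmonic, $r\le1/(2e^2)$ on $V$ after shrinking, and $d\mu_r$ is a fixed multiple of $(2m-2)$-dimensional Lebesgue measure, uniformly in $\sigma$ --- viewing sections of $E$ as sections of $K\otimes E$ via $K|_V=\cO_V\,dz_1\wedge\dots\wedge dz_m$. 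This produces a holomorphic $g$ over $V$ with $g=f$ on $V\cap P_\sigma$ and $\int_V h_j(g)\le C_3^2\|f|V\cap P_\sigma\|^2$, where $C_3$ (absorbing $4^{5}$, $\sup_V r\log^2 r<\infty$, and the $d\mu_r$/Lebesgue ratio) is independent of $j$ and $\sigma$. As $\int_V h_j(g)<\infty$, $\mathbf g_0\in E(h_j,0)\subset M$, so $G:=g\circ\alpha$ has $\mathbf G_0\in N$, while $F(\omega t)=G(\omega t)$ for every $k$-th root of unity $\omega$ and any $t$ with $t^k=\sigma$, because $\alpha(\omega t)\in P_\sigma$. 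Combined with the Cauchy estimate and the Ohsawa--Takegoshi bound, the converse will follow once we show $\max_{\overline\Delta}|G|\ge C_1|\sigma|^{-1}$ with $C_1>0$ independent of $\sigma$: then $\|f|V\cap P_\sigma\|\ge C_1/(C_2C_3|\sigma|)$, hence $\liminf\dist(0,P_\sigma)\|f|V\cap P_\sigma\|>0$.

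To prove this lower bound --- the substitute for Proposition~2.2 --- I argue as follows. Since $N$ is finitely generated over the discrete valuation ring $\cO_{(\bC,0)}$, an Artin--Rees argument (based on the fact that $d$ many $\cO_{(\bC,0)}$-independent $H$-valued germs have a nonzero $d\times d$ minor $\delta$, so that a germ in $N$ of order $\ge L$ lies in $\mathfrak m^{L-\operatorname{ord}_0\delta}N$, $\mathfrak m\subset\cO_{(\bC,0)}$ the maximal ideal) shows $N$ is $\mathfrak m$-adically closed in the module of $H$-valued germs at $0$; hence $\mathbf f_0\circ\alpha$ fails to lie in $N$ modulo germs vanishing to order $L+1$, for some finite $L$. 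Let $W\subset H$ be the finite-dimensional span of the order-$\le L$ Taylor coefficients of $F,\boldsymbol\Gamma_1,\dots,\boldsymbol\Gamma_p$ and $\pi\colon H\to W$ the orthogonal projection. Then $\widetilde N:=\pi(N)$ is a finitely generated submodule of $\cO_{(\bC,0)}^{\,w}$ ($w=\dim W$) with $(\pi F)_0\notin\widetilde N$ (by the choice of $L$), $(\pi G)_0\in\widetilde N$ (as $\mathbf g_0\in M$), and $\pi F(\omega t)=\pi G(\omega t)$. By the structure theory over $\cO_{(\bC,0)}$ applied to $\cO_{(\bC,0)}^{\,w}/\widetilde N$ there is a $\cO_{(\bC,0)}$-linear map (a row vector over $\cO_{(\bC,0)}$) $\rho\colon\cO_{(\bC,0)}^{\,w}\to\cO_{(\bC,0)}$ with $\rho(\widetilde N)\subset\mathfrak m^q$ and $\operatorname{ord}_0(\rho\cdot\pi F)=:\nu<q$; then $\Phi:=\rho\cdot\pi F$ and $\Psi:=\rho\cdot\pi G$ are scalar germs with $\operatorname{ord}_0\Psi\ge q>\nu$ agreeing at the points $\omega t$, so the averaging $\frac1k\sum_\omega\omega^{-\nu}\Psi(\omega s)$ of Proposition~2.2's proof gives $\sup_\Delta|\Psi|\ge c|\sigma|^{-1}$ with $c>0$ depending only on fixed data, and $|\Psi|\le(\sup|\rho|)\,|\pi G|\le(\sup|\rho|)\,|G|$ by orthogonality of $\pi$ yields the claim. (If instead $(\pi F)_0$ already has nonzero image in the torsion-free quotient $\cO_{(\bC,0)}^{\,w}/\widetilde N^{\,\mathrm{sat}}$, one instead gets a row $\rho$ with $\rho(\widetilde N)=0$, $\rho\cdot\pi F\not\equiv0$; then $(\rho\cdot\pi F)(\omega t)=(\rho\cdot\pi G)(\omega t)=0$ would force the \emph{fixed} nonzero germ $\rho\cdot\pi F$ to vanish on $\{|s|=|t|\}$, impossible for small $\sigma$. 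Since this is a property of $\widetilde N$ and $(\pi F)_0$ alone, in this branch no $g$ as above exists for small $\sigma$, i.e.\ $\|f|V\cap P_\sigma\|=\infty$ there, and the conclusion holds trivially.)

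I expect this last step to be the main obstacle. Proposition~2.2 is transparent precisely because $\cO_{(\bC,0)}$ is a principal ideal domain, so that $\cO_{(\bC,0)}(J\circ\alpha)=(s^{n_0})$ and the comparison of orders is immediate; for modules one must instead truncate to a finite jet --- hence pass to a finitely generated quotient module where structure theory applies --- which requires the $\mathfrak m$-adic closedness (Artin--Rees) argument, and only then can one apply the averaging device of Proposition~2.2 to a scalar projection. This is the ``auxiliary material that is not as readily available for vector bundles'' of the introduction; a secondary point is the module version of the Lejeune-Jalabert--Teissier curve criterion, which applies because $M$, being cut out by $L^2$ integrability conditions, is integrally closed.
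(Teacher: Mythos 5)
Your proposal takes a genuinely different route from the paper at the crucial step, namely the reduction from vector-valued to scalar. The paper pulls back sections not by a curve $\alpha\colon\Delta\to U$ into the base but by a curve $\alpha\colon\Delta\to E^*|V$ into the \emph{dual bundle}, producing scalar functions $\alpha^\star g(s)=\alpha(s)\,g(\pi\alpha(s))$; equivalently it attaches to each section $g(z)=(g_\nu(z))$ the function $\hat g(z,w)=\sum_\nu g_\nu(z)w_\nu\in\cO(V\times\bC^n)$ and works with the integral closure $\hat M\subset\cO_{(\bC^{m+n},0)}$ of the \emph{ideal} generated by $\hat{\mathbf g}^1_0,\ldots,\hat{\mathbf g}^p_0$. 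Proposition 7.2 and Schwarz show $\hat{\mathbf g}_0\in\hat M$ forces $\mathbf g_0\in M$, so $\hat{\mathbf f}_0\notin\hat M$, and the scalar curve criterion [LT] and the scalar Proposition 2.2 then apply verbatim. Your proposal keeps $\alpha\colon\Delta\to U$, so that $F=f\circ\alpha$ and $G=g\circ\alpha$ stay $H$-valued, and you then build a vector-valued substitute for Proposition 2.2 via a finite-dimensional truncation $\pi\colon H\to W$, Artin--Rees over the DVR $\cO_{(\bC,0)}$, and the structure theorem for finitely generated $\cO_{(\bC,0)}$-modules. This is plausibly viable, but it is considerably heavier; the paper's $E^*$-device exists precisely so that this machinery is never needed.

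Two points are genuine gaps. First, you invoke ``the module version of the curve criterion'' citing [LT, Th.~2.1] and [GZ2, Remark~2.12], both of which concern ideals. The module version is a real theorem (Rees, Gaffney), but it is not in those references, and the standard way to obtain it from the ideal case is exactly the Rees-algebra/$\hat g$ reduction that constitutes the core of the paper's proof---so you are assuming a result whose proof is essentially the step you are trying to replace. Moreover, that theory is for submodules of $\cO^n$, $n<\infty$; since $E$ may have infinite rank, one first needs the reduction via [L2, Lemma 5.1] showing $f,g^1,\ldots,g^p$ are locally sections of a finite-rank subbundle. The paper does this explicitly and you omit it, which also affects your Artin--Rees step: it is applied to a submodule of the (infinitely generated) module of $H$-valued germs, and the hands-on minor argument you sketch really needs the finite-rank reduction to be carried out first. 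Second, your integral-closedness check for $M$ is stated with the weaker, pointwise-norm condition $\|\psi(z)\|\le C\sum_i\|g^i(z)\|$, whereas what the curve criterion feeds on is the Rees closure (the $\hat M$ condition on $V\times\bC^n$); since the Rees condition implies the pointwise one, your check is in fact strong enough, but this should be said, and the argument again goes through the $\hat g$ construction if one wants to match it to the cited criterion. Once the finite-rank reduction and a proper reference (or proof) of the module curve criterion are supplied, your finite-jet/structure-theorem substitute for Proposition 2.2 looks like it could be made rigorous, but it buys nothing over the paper's shorter scalar reduction.
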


We need the following simple

\begin{prop}Let $W$ be a complex vector space, $(B,\|\ \|)$ a normed space, $L\colon W\to B$ and $l\colon W\to\bC$ linear.
If $|l(w)|\leq C\|L(w)\|$ for every $w\in W$ with some constant $C$, then there is a linear map $a\colon B\to\bC$ of norm $\leq C$ such that $l=aL$.
\end{prop}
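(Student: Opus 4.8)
The plan is to reduce this to the Hahn--Banach theorem. The value of the desired $a$ is already forced on the linear subspace $L(W)\subset B$: it must satisfy $a(L(w))=l(w)$. So the first step is to verify that the prescription $a_0(L(w))=l(w)$ actually well-defines a linear functional $a_0$ on $L(W)$. This is exactly where the hypothesis is used: if $L(w)=L(w')$, then $L(w-w')=0$, hence $|l(w-w')|\le C\|L(w-w')\|=0$, which forces $l(w)=l(w')$. Linearity of $a_0$ is then immediate from the linearity of $l$ and $L$.

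Next I would record that $a_0$ is bounded on $L(W)$, with norm at most $C$, when $L(W)$ is equipped with the norm it inherits from $B$: for $b=L(w)\in L(W)$ we have $|a_0(b)|=|l(w)|\le C\|L(w)\|=C\|b\|$.

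Finally, apply the Hahn--Banach theorem, in its complex form, to extend $a_0$ from the subspace $L(W)$ to a linear functional $a\colon B\to\bC$ with $\|a\|\le C$. By construction $a(L(w))=a_0(L(w))=l(w)$ for every $w\in W$, i.e.\ $l=aL$, as required.

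There is no real obstacle here: the only nontrivial point is the well-definedness of $a_0$, which is a one-line consequence of the assumed inequality, after which Hahn--Banach does everything. Note in particular that completeness of $B$ is not needed, and the extension is taken on the subspace $L(W)$ itself, not on its closure.
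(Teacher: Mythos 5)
Your proof is correct and follows exactly the same route as the paper: define the functional on the subspace $L(W)$, observe that the hypothesis forces $\ker L\subset\ker l$ so the definition is legitimate and bounded by $C$, then extend by Hahn--Banach. The only difference is that you spell out the well-definedness step a bit more explicitly.
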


\begin{proof}First we define $a$ on $L(W)\subset B$.
If $u=L(w)\in L(W)$, set $a(u)=l(w)$.
This is independent of the choice of $w$, since Ker $L\subset\text{ Ker }l$.
Further,
\[
|a(u)|= |l(w)|\leq C\|L(w)\|=C\|u\|.
\]
By the Banach--Hahn theorem we extend $a$ to a linear form on $B$ satisfying the same estimate; this extension will clearly do.
\end{proof}

\noindent
\begin{proof}[Proof of Lemma 7.1]
We will assume $x=0$.
The ``only if'' direction follows from Fubini's theorem as in the proof of Lemma 2.1.
Conversely, we will show that if $\mathbf f_0\not\in M$ then for any neighborhood $V\subset U$ of 0 and for some hyperplane $P_0$
\begin{equation}
\liminf {\rm{dist}} (x,P)\| f|V\cap P\|>0,\quad \text{as }P\|P_0\text{ and }{{\rm{dist}}}(x,P)\to 0.
\end{equation}

Fix $V$.
We can assume it is pseudoconvex, relatively compact in $U$, and there are $g^1,\ldots,g^p\in\Gamma (V,E)$ whose germs generate $M$.
We can also assume $\int_V h_{j_0} (g^i)<\infty$ with some $j_0$ and $i=1,\ldots,p$.

Write $\pi\colon E\to U$ for the bundle projection.
If $\Delta\subset\bC$ is the unit disc and $\alpha\colon\Delta\to E^*|V$ is holomorphic, mapping $0\in\Delta$ to the zero vector in $E_0^*$, we can associate with sections $g\in\Gamma(V,E)$ functions $\alpha^\star g\in\cO(\Delta)$ by evaluating $\alpha(s)$ on $g(\pi\alpha(s))$:
\[
(\alpha^\star g)(s)=\alpha (s) g(\pi\alpha (s)).
\]
Likewise we can pull back germs $\mathbf g_0$ of sections of $E$ to germs $\alpha^\star \mathbf g_0\in\cO_{(\bC,0)}$. Set $\alpha^\star M=\{\alpha^\star \mathbf g_0\colon \mathbf g_0\in M\}$.
We claim that there is an $\alpha$ such that
\begin{equation}
\alpha^\star \mathbf f_0\not\in \cO_{(\bC,0)}\alpha^\star M.
\end{equation}

Indeed, [L2, Lemma 5.1] implies that over some neighborhood of $0\in U$, our $f,g^1,\ldots,g^p$ are in fact sections of a finite rank holomorphic subbundle of $E$.
We will construct $\alpha$ as the pull back of a map into the dual of this subbundle, and so we can assume at this juncture that the rank of $E$ itself is finite.
Of course, we can also assume $E$ is trivial, $E=U\times\bC^n\to U$.
With any $g\in\Gamma(V,E)$ given by $g(z)=(z,g_1(z),\ldots,g_n(z))$ we associate a function $\hat g\in\cO (V\times\bC^n)$,
\[
\hat g(z,w)=\sum_\nu g_\nu (z) w_\nu.
\]
We do likewise with germs of sections of $E$ at $0\in U$.
Consider the integral closure $\hat M\subset\cO_{(\bC^{m+n},0)}$ of the ideal generated by $\hat{\mathbf g}_0^1,\ldots,\hat{\mathbf g}_0^p$.
Thus, again by [LT, Th\'eor\`eme 2.1], $\hat M$ consists of germs $\boldsymbol\varphi_0\in\cO_{(\bC^{m+n},0)}$ of functions $\varphi$ that satisfy
\begin{equation}
|\varphi|^2\leq C^2 \left( |\hat g^1|^2+\ldots + |\hat g^p|^2\right)
\end{equation}
on some neighborhood of $0\in\bC^{m+n}$, with some constant $C$.
Suppose $\varphi=\hat g$ satisfies (7.5) on some neighborhood; the neighborhood can be taken of form $\pi^{-1}(V_0)$, with $V_0\subset V$ a neighborhood of $0\in\bC^{m}$. Let $z\in V_0$.
We apply Proposition 7.2 with $W=\bC^n$, $B$ the Euclidean space $\bC^p$, the components of $L:\bC^n\to\bC^p$ the functions $\hat g^i(z,\cdot)$, $i=1,\ldots,p$, and $l=\hat g(z,\cdot)$.
This produces $a_i\in\bC$ such that $g(z)=\sum a_i g^i(z)$ and $\sum|a_i|^2\leq C^2$.
By Schwarz's inequality 
\[
h_{j_0} (g(z))\leq \sum_i |a_i|^2\sum_i h_{j_0} (g^i (z))\leq C^2\sum_i h_{j_0} (g^i (z)).
\]
We see that for a $\varphi$ of form $\hat g$  (7.5) implies $\mathbf g_0\in M$.
Since our $\mathbf f_0\not\in M$, it follows that $\varphi=\hat f$ does not satisfy (7.5) on any neighborhood of $0$, i.e.~$\hat{\mathbf f}_0\not\in\hat M$.
As in the proof of Lemma 2.1, according to [LT] this implies that there is a holomorphic $\alpha\colon\Delta\to V\times\bC^n$, $\alpha(0)=0$, such that $\hat{\mathbf f}_0\circ\alpha\not\in\cO_{(\bC,0)}\hat M\circ\alpha$.
Then $\alpha$, viewed as a map $\Delta\to E^*|V=V\times\bC^n$, satisfies (7.4).

There is quite some flexibility in the choice of $\alpha$.
To wit, $\hat{\mathbf f}_0\circ\alpha\not\in \cO_{(\bC,0)}\hat M\circ\alpha$ means that at $0\in\Delta$ the order of each $\hat g_i\circ\alpha$ is greater than the order of $\hat f\circ\alpha$.
This will clearly persist if we perturb $\alpha$ by adding terms whose order is greater than the order of $\hat f\circ\alpha$.
We use this flexibility to arrange that $\pi\circ\alpha\neq 0$.

Next choose a hyperplane $P_0$ through $0\in\bC^m$ that does not contain $\pi\alpha(\Delta)$.
Again we can adjust coordinates in $\bC$ and in $\bC^n$ so that $P_0=\{z\in\bC^m\colon z_1=0\}$, that $\alpha$ is holomorphic in a neighborhood of $\overline\Delta$, that $F=\alpha^\star f\neq 0$ on $\overline\Delta\backslash \{0\}$, that the first component of $\pi\alpha$ satisfies with some $k\in\bN$ 
\[
\pi_1\alpha(s)=s^k,\quad s\in\Delta,
\]
and $\alpha(\overline\Delta)\subset V\times\bC^n$.
This latter implies for any $g\in\Gamma(V,E)$
\begin{equation}
\max_{\overline\Delta} |\alpha^\star g|^2\leq C_2^2 \int_V h(g)\leq C_2^2 \int_V h_j (g),\qquad j=1,2\ldots,
\end{equation}
with some $C_2$ independent of $g$.

Let $\sigma\in\Delta\backslash \{0\}$ and let $P_\sigma=\{z\in\bC^m\colon z_1=\sigma\}$.
Assume first $\|f|V\cap P_\sigma\|<\infty$.
We apply Theorem 6.1 with $(X,\omega)=(V,\sum dz_\nu\wedge d\overline z_\nu)$, $Y=V\cap P_\sigma$ and $r(z)=c |z_1-\sigma|^2$.
If the constant $c>0$ is sufficiently small, then $r\leq 1/(2e^2)$ on $V$.
The volume form $d\mu_r$ on $V\cap P_\sigma$ is a constant multiple of the Euclidean volume.
Choose $j$ such that
\[
\int_{V\cap P_\sigma} h_j (f)\leq 2\|f|V\cap P_\sigma\|^2.
\]
Since the bundles $(K\otimes E,h_j^K)$ and $(E,h_j)$ are isometrically isomorphic, Theorem 6.1 produces a $g\in\Gamma (V,E)$ with
\begin{equation}
f=g\text{ on }V\cap P_\sigma\qquad\text{ and }\qquad\int_V h_j(g)\leq C_3^2 \|f|V\cap P_\sigma\|^2,
\end{equation}
where $C_3$ is independent of $\sigma$.
Thus $\mathbf g_0\in M$ and the germ of $G=\alpha^\star g$ is in $\alpha^\star M$.
As the germ of $F=\alpha^\star g$ is not in $\cO_{(\bC,0)}\alpha^\star M$, it follows that $G=o(F)$ at $0\in\Delta$.
Further, by (7.7) $F(\root k\of \sigma)=G(\root k\of \sigma)$ for any choice of $k$'th root $\root k\of \sigma$.
Hence by Proposition 2.2
\begin{equation}
\max_{\overline\Delta} |\alpha^\star g|=\max_{\overline\Delta} |G|\geq C_1/|\sigma|.
\end{equation}
(7.6), (7.7), and (7.8) together yield
\[
\|f|V\cap P_\sigma\|\geq \frac{C_1}{ C_2 C_3 |\sigma|},\quad \sigma\in\Delta\backslash \{0\}.
\]
As this also holds when $\|f|V\cap P_\sigma\|=\infty$, (7.3) has been proved and, with it, Lemma 7.1.
\end{proof}

\noindent
\begin{proof}[Proof of Theorem 1.2.]
We prove by induction on $m$.
Again, the case $m=0$ is obvious.
Assume the statement for $m-1$, and consider the $m$ dimensional theorem.
To apply the induction hypothesis we have to understand whether the restrictions $E|P$ to hyperplanes $P\subset\bC^m$ satisfy the hypothesis of the theorem.
On the one hand, if rk $E<\infty$ then of course rk $E|P<\infty$.
On the other hand, if $\bigcup_j\cE (h_j)$ is locally finitely generated, then the question becomes whether the sheaf ${\cF}\to P$ whose stalk at $x\in P$ is
\begin{equation}
\{\boldsymbol\varphi_x\colon \varphi\in\Gamma (V\cap P,E),\int_{V\cap P} h_j (\varphi) < \infty\text{ for some } j\text{ and open }V\subset U,\ x\in V \}
\end{equation}
is also locally finitely generated.
This will be true for almost all $P$.
For, at the price of shrinking
$U$, we can assume there are $g^1,\ldots,g^p\in\Gamma(E)$ that generate each stalk of $\bigcup_j \cE(h_j)$.
We claim that whenever $P$ is such that
for some $j$
\[
\int_{U\cap P} h_{j} (g^i)<\infty,\qquad i=1,\ldots,p,
\]
the sheaf $\cF$ is generated by $g^i|P$.
Indeed, let $\boldsymbol\varphi_x\in{\cF}_x$ be the germ of a $\varphi\in
\Gamma (V\cap P,E)$ as in (7.9).
Assuming, as we may, that $V$ is pseudoconvex, Theorem 6.1 can be applied as in the proof of Lemma 7.1 to extend $\varphi$ to a $g\in\Gamma (V,E)$ such that $\int_V h_j (g)<\infty$.
Hence $g_x\in \bigcup_j E(h_j,x)=M_x$.
But since the latter module is generated by the $g^i$, it follows that $\boldsymbol\varphi_x=\mathbf g_x|P$ is in the module generated by $\mathbf g_x^i|P$.

We need to show $M_x=E(h,x)$ for arbitrary $x$, which we will take to be 0. As above, we assume that $g^1,\ldots,g^p\in\Gamma(E)$ generate each stalk of $\cE$.
Fix a relatively compact neighborhood $V_0\subset U$ of 0 and an $f\in\Gamma (V_0,E)$ such that $\int_{V_0} h(f)<\infty$.
We will show that $\mathbf f_0\in M_0$ using the characterization in Lemma 7.1.
Take a neighborhood $V$ of 0, relatively compact in $V_0$, and a hyperplane $P_0$.
Again we assume $P_0=\{t\in\bC^m\colon z_1=0\}$ and for $s\in\bC$ write $P_s=\{z\in\bC^m\colon z_1=s\}$.
Fubini's theorem guarantees that there are a $j_0$ and a set $S\subset\bC$ of full measure such that
\[
\int_{V_0\cap P_s} h(f)<\infty\quad\text{ and }\quad\int_{V_0\cap P_s} h_{j_0} (g^i) < \infty\text{ for }s\in S,\ i=1,\ldots,p,
\]
and
\begin{equation}
\liminf_{S\ni s\to 0} |s|^2 \int_{V_0\cap P_s} h(f)=0.
\end{equation}
The induction hypothesis implies that for each $s\in S$ there is a $j$ such that $\int_{V\cap P_s} h_j (f)<\infty$, whence 
\[
\| f|V\cap P_s\|^2=\int_{V\cap P_s} h(f),\qquad\text{ cf.~(7.1)}.
\]
Hence (7.10) implies (7.2) (with $x=0$), and so by Lemma 7.1, $\mathbf f_0\in M_0$ indeed.
\end{proof}

Here is an example that shows that in Theorem 1.2 the condition of finite generation can not be simply dropped.
Let $U\subset\bC$ be the unit disc and $E$ the trivial bundle $U\times l^2\to U$.
We endow $E$ with metrics that are determined by a sequence $\sigma=(\sigma_1,\sigma_2,\ldots)$ of nonnegative numbers,
\[
h(z,w)=h^\sigma (z,w)=\sum_\nu |w_\nu|^2/|z|^{2\sigma_\nu},\quad z\in U,\ w=(w_\nu)\in l^2.
\]
We only consider $\sigma_\nu$ such that $\sup_\nu \sigma_\nu <2$.
Let $V\subset U$ be a neighborhood of 0.
If $f\in\Gamma (V,E)$ given by $f(z)=(z,f_1(z),f_2(z),\ldots)$ is in $L^2_{\loc}(V,h)$, then $f_\nu(0)=0$ whenever $\sigma_\nu\geq 1$.
Assuming $V$ is the disc $\{z\in\bC\colon |z| <\rho\}$, with $\rho\in (0,1]$, the $L^2(V,h)$ norm of such $f$ is given in terms of the Taylor coefficients of $f_\nu(z)=\sum_k a_{\nu k} z^k$:
\[
\int_V |f|_h^2=\pi\sum_{\nu,k} |a_{\nu k}|^2\ \frac{\rho^{2(k-\sigma_\nu+1)}}{ k-\sigma_\nu+1}.
\]

Let $\sigma_\nu=1-1/\nu$.
Then $f_\nu (z)\equiv 1/\nu^2$ defines a section $f$ of $E$ whose germ $\mathbf f_0\in E(h^\sigma)$.
However, for no $j\in\bN$ is this germ in $E(h^{(1+1/j)\sigma})$, and so
\[
\bigcup^\infty_{j=1} (E (h^{(1+1/j)\sigma},0)\subsetneq E(h^\sigma,0).
\]

At the same time, any holomorphic section in $L^2(V,h^\sigma)$ can be approximated by holomorphic sections in $\bigcup_j L^2 (V,h^{(1+1/j)\sigma})$.
To end this paper, we propose the following float.
(According to the late Lee Rubel, a float, much like a conjecture, is a mathematical statement one would like to see proved; but while to make a conjecture the conjecturer should have substantial evidence in its favor, a float is allowed once it occurs to the floater that the statement might be true.)

Consider a holomorphic Hilbert bundle $E\to U$ over a pseudoconvex open $U\subset\bC^m$ and let $h_1\geq h_2\geq\ldots$ be hermitian metrics on $E$.
Assume that the Nakano curvature of each $h_j$ dominates $0$ and that $h=\lim h_j$ dominates a continuous hermitian metric.
Let $f\in\Gamma (E)\cap L^2 (U,h)$ and $V\subset U$ be a sublevel set of a plurisubharmonic exhaustion function.
Then in the Hilbert space $L^2(V,h)$ the section $f|V$ can be approximated by holomorphic sections of $E|V$ that are in $\bigcup_j L^2 (V,h_j)$.

\end{document}